\documentclass[a4paper,12pt]{article}

\usepackage[T1]{fontenc}
\usepackage{amsmath}
\usepackage{amssymb}
\usepackage{amsfonts}
\usepackage{amsthm}
\usepackage{indentfirst}
\usepackage{psfrag}
\usepackage{amscd,stmaryrd,latexsym}

\newtheorem{thm}{Theorem}[section]
\newtheorem{lem}{Lemma}[section]

\newtheorem{Prop}{Proposition}

\theoremstyle{remark}

\theoremstyle{definition}

\theoremstyle{remark}
\newtheorem{oss}{Remark}[section]

\def\eps{\mathop{\varepsilon}}

\newcommand{\be}{\begin{equation}}
\newcommand{\ee}{\end{equation}}
\newcommand{\R}{\mathbb{R}}
\newcommand{\N}{\mathbb{N}}
\newcommand{\E}{\mathbb{E}}
\newcommand{\T}{\mathbb{T}}
\newcommand{\C}{\mathbb{C}}
\newcommand{\CP}{\mathbb{C}\mathbb{P}}

\newcommand\res{\mathop{\hbox{\vrule height 7pt width .5pt depth 0pt
\vrule height .5pt width 6pt depth 0pt}}\nolimits}

\def\Om{\Omega}
\def\om{\omega}
\def\p{\partial}

\def\omtp{\tilde{\omega}_+}

\begin{document}

\title{\textbf{Almost complex structures and calibrated integral cycles in contact $5$-manifolds}}
\author{\textit{Costante Bellettini}\footnote{Address: D-MATH, ETH Zentrum, R\"amistrasse 101, 8092 Z\"urich, Switzerland.}} 
\date{}
\maketitle

\textbf{Abstract}: \textit{In a contact manifold $(\mathcal{M}^5, \alpha)$, we consider almost complex structures $J$ which satisfy, for any vector $v$ in the horizontal distribution, $d\alpha(v,Jv)=0$. We prove that integral cycles whose approximate tangent planes have the property of being $J$-invariant are in fact smooth Legendrian curves except possibly at isolated points and we investigate how such structures $J$ are related to calibrations.}

\section{Introduction}

In the last fifteen years, there has been a growing interest regarding links between the theory of vector bundles and the geometry of submanifolds. Striking examples of intimate correlations were found in many geometrical and physical problems. Donaldson and Thomas exhibited in \cite{DT} relations between some invariants in complex geometry and spaces of solutions to Yang-Mills equations. Tian showed in \cite{T} that some particular sequences of Yang-Mills fields present a loss of compactness along calibrated recti\-fiable currents. Taubes (see \cite{Ta}) proved that Seiberg-Witten invariants in a symplectic 4-manifold coincide with Gromov invariants. Mirror symmetry (see in particular \cite{SYZ}) described, in the framework of a String Theory model, a phenomenon regarding Special Lagrangian cycles (see \cite{J} for an overview).

A common feature in these situations is the important role played by the so-called \textit{calibrations}. This notion is strongly related to the theory of minimal submanifolds. For a history of calibrations the reader may consult \cite{M}. In the fundational essay \cite{HL} the authors exhibited and studied several rich ``Calibrated geometries''.

In \cite{BR}, together with T. Rivi\`ere, we analyzed the regularity of Special Legendrian integral cycles in $S^5$. In this work that result is generalized to contact 5-manifolds with certain almost complex structures.

\medskip

\textbf{Setting and main result}.
Let $\mathcal{M}=\mathcal{M}^5$ be a five-dimensional manifold endowed with a contact structure\footnote{For a broader exposition on contact geometry, the reader may consult \cite{B} or \cite{MS}.} defined by a one-form $\alpha$ which satisfies everywhere 
\be
\label{eq:defcontact}
\alpha \wedge (d \alpha)^2 \neq 0.
\ee

Remark that the existence of a contact structure implies the orientability of $\mathcal{M}$. We will assume $\mathcal{M}$ oriented by the top-dimensional form $\alpha \wedge (d \alpha)^2$.

Condition (\ref{eq:defcontact}) means that the horizontal distribution $H$ of $4$-dimensional hyperplanes $\{H_p\}_{p\in M}$ defined by
\be
H_p:= Ker \; \alpha_p 
\ee
is ``as far as possible'' from being integrable. The integral submanifolds of maximal dimension for the contact structure are of dimension two and are called \textit{Legendrians}.

\medskip

Given a contact structure, there is a unique vector field, called the Reeb vector field $R_\alpha$ (or vertical vector field), which satisfies $\alpha(R_\alpha)=1$ and $\iota_{R_\alpha} d\alpha =0$. 

\medskip
An almost-complex structure on the horizontal distribution is and endomorphism $J$ of the horizontal sub-bundle which satisfies $J^2=-Id$. Given a horizontal, non-degenerate two-form $\beta$, i.e. a two-form such that $\iota_{R_\alpha} \beta =0$ and $\beta \wedge \beta \neq 0$, we say that an almost-complex structure $J$ is compatible with $\beta$ if the following conditions are satisfied:

\be
\label{eq:compatibility}
\beta(v,w)=\beta(Jv, Jw), \;\; \beta(v,Jv)>0 \;\; \text{ for any } v,w \in H.
\ee
In this situation, we can define an associated Riemannian metric on the horizontal sub-bundle by $$g_{J,\beta}(v,w)=\beta(v,Jw).$$ 

\medskip

We can extend an almost-complex structure $J$ defined on the horizontal distribution, to an endomorphism of the tangent bundle $T \mathcal{M}$ by setting
\begin{equation}
\label{eq:Jextended}
J(R_\alpha)=0.
\end{equation}
Then it holds $J^2= -Id + R_\alpha \otimes \alpha$.

With this in mind, extend the metric to a Riemannian metric on the tangent bundle by

\begin{equation}
g:=g_{J,\beta} + \alpha \otimes \alpha.
\end{equation}

This extensions will often be implicitly assumed. Remark that $R_\alpha$ is orthogonal to the hyperplanes $H$ for the metric $g$:
\be
g(R_\alpha, X)=\beta(R_\alpha, J X) + \alpha(R_\alpha)  \alpha(X) = 0 \;\text{ for } X\in H= Ker \;\alpha.
\ee

\underline{Example}: We describe the \textit{standard contact structure} on $\R^5$. Using coordinates $(x_1, y_1, x_2, y_2, t)$ the standard contact form is $\zeta=dt-(y_1 dx^1 +y_2 dx^2)$. The expression for $d \zeta$ is $dx^1 d y^1 +  dx^2 d y^2$ and the horizontal distribution is given by 

\be
\label{eq:standardker}
Ker \;\zeta = Span \{\p_{x_1}+y_1 \p_t, \p_{x_2}+y_2 \p_t, \p_{y_1}, \p_{y_2}\}.
\ee

The standard almost complex structure $I$ compatible with $d \zeta$ is the endomorphism
\be
\label{eq:I}
\left \{ \begin{array}{c} 
I(\p_{x_i}+y_i \p_t)= \p_{y_i}\\
I(\p_{y_i})= -(\p_{x_i}+y_i \p_t) \end{array} \right . i\in\{1,2\}.
\ee

$I$ and $d\zeta$ induce the metric $g_\zeta:=d \zeta(\cdot, I \cdot)$ for which the hyperplanes $Ker \;\zeta$ are orthogonal to the $t$-coordinate lines, which are the integral curves of the Reeb vector field. The metric $g_\zeta$ projects down to the standard euclidean metric on $\R^4$, so the projection

\be
\begin{array}{ccc}
\pi: \R^5 & \rightarrow & \R^4 \\
(x_1, y_1, x_2, y_2, t) & \rightarrow & (x_1, y_1, x_2, y_2)\end{array}
\ee

is an isometry from $(\R^5, g_\zeta)$ to  $(\R^4, g_{\text{eucl}})$.

\medskip
 
The following will be useful in the sequel:
\begin{oss}
\label{oss:parall}
Observing (\ref{eq:standardker}), we can see that, for any $q \in \R^4$, all the hyperplanes $H_{\pi^{-1}(q)}$ are parallel in the standard euclidean space $\R^5$. Thus the lift of a vector in $\R^4=\{t=0\}$ with base-point $q$ to an horizontal vector based at any point of the fiber $\pi^{-1}(q)$ has always the same coordinate expression along this fiber.
\end{oss}

We will be interested in two-dimensional Legendrians which are invariant for suitable almost complex structures defined on the horizontal distribution. What we require for an almost-complex structure $J$ on the horizontal sub-bundle is the following \textbf{Lagrangian condition}

\be
\label{eq:Jlagr}
d \alpha (J v, v)= 0 \; \text{ for any } v \in H.
\ee
This requirement amounts to asking that any $J$-invariant 2-plane must be Lagrangian for the symplectic form $d \alpha$.
It is also equivalent to the following \textbf{anti-compatibility condition}
\be
\label{eq:anticomp}
d \alpha (v, w)= - d \alpha (J v, Jw) \; \text{ for any } v,w \in H.
\ee

It is immediate that (\ref{eq:anticomp}) implies (\ref{eq:Jlagr}). On the other hand, using (\ref{eq:Jlagr}):
$$0= d \alpha (J (v+w), v+w)= d \alpha (J v, v) + d \alpha (J w, w) + d \alpha (J v, w) + d \alpha (J w, v) =$$ $$= d \alpha (J v, w) + d \alpha (J w, v)$$
so
$$d \alpha (J v, w) = d \alpha (v,J w) \;\; \text{for any horizontal vectors } v \text{ and } w.$$
Writing this with $J v$ instead of $v$, and being $J$ an endomorphism of $H$, we obtain (\ref{eq:anticomp}).

\medskip

By an \textbf{integral cycle} $S$ we mean an integer multiplicity rectifiable current without boundary. These are the generalized submanifolds of geometric measure theory. They have the fundamental property of possessing at almost every point $x$ an \textit{oriented approximate tangent plane} $T_x S$. For the topic, we refer the reader to \cite{F} or \cite{G}. We will deal with integral cycles of dimension $2$.

\medskip

For the sequel, we recall the notion of \textbf{calibration}, confining ourselves to $2$-forms. For a broader exposition, and for the connections to mass-minimizing currents, the reader is referred to \cite{HL}, \cite{M} and \cite{J}.

Given a $2$-form $\phi$ on a Riemannian manifold $(M,g)$, the comass of $\phi$ is defined to be
\[||\phi||_*:=  \sup \{\langle \phi_x, \xi_x \rangle: x \in M, \xi_x \text{ is a unit simple $2$-vector at } x\}.\]
A form $\phi$ of comass one is called a \textit{calibration} if it is closed ($d \phi = 0$); when it is non-closed it is referred to as a \textit{semi-calibration}. 

Let $\phi$ be a \textit{calibration} or a \textit{semi-calibration}; among the oriented $2$-dimen\-sional planes that constitute the Grassmannians $G(x, T_xM)$, we pick those that, represented as unit simple $2$-vectors, realize the equality $\langle \phi_x, \xi_x \rangle = 1$. Define the set $\mathcal{G}(\phi)$ of  \textit{$2$-planes calibrated by $\phi$} as
\[\mathcal{G}(\phi) = \cup_{x \in M} \{\xi_x \in G(x, T_xM): \langle \phi_x, \xi_x \rangle = 1 \}.\]
Given a (semi)-calibration $\phi$, an integral cycle $S$ of dimension $2$ is said to be \textit{(semi)-calibrated by $\phi$} if 
\[\text{for } \mathcal{H}^2 \text{-almost every } x, \;T_x S \in \mathcal{G}(\phi).\]
If $d \phi=0$, a calibrated cycle is automatically homologically mass minimizing. However we will be generally concerned with semi-calibrations.

\medskip

The main result in this work is the following
\begin{thm}
\label{thm:main}
Let $\mathcal{M}$ be a five-dimensional manifold endowed with a contact form $\alpha$ and let $J$ be an almost-complex structure defined on the horizontal distribution $H= Ker \; \alpha$ such that $d \alpha (J v, v)= 0 \; \text{ for any } v \in H$. 

Let $C$ be an integer multiplicity rectifiable cycle of dimension $2$ in $\mathcal{M}$ such that $\mathcal{H}^2$-a.e. the approximate tangent plane $T_x C$ is $J$-invariant\footnote{Representing a $2$-plane as a simple $2$-vector $v \wedge w$, the condition of $J$-invariance means $v \wedge w= Jv \wedge Jw$. With (\ref{eq:Jextended}) in mind, we see that a $J$-invariant $2$-plane must be tangent to the horizontal distribution.}. 

Then $C$ is, except possibly at isolated points, the current of integration along a smooth two-dimensional Legendrian curve.
\end{thm}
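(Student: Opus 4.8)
The plan is to reduce the statement to a local regularity question for a pseudo-holomorphic-type object and then invoke the regularity theory for calibrated (or $J$-holomorphic) cycles. First I would work locally: around a point $p$ of the support of $C$, use Darboux's theorem for contact forms to identify a neighbourhood with the standard contact structure $(\R^5,\zeta)$ of the example, and transport $J$ to an almost complex structure on $\operatorname{Ker}\zeta$ still satisfying the Lagrangian condition \eqref{eq:Jlagr}. Since every $J$-invariant $2$-plane is horizontal (by the footnote, using \eqref{eq:Jextended}), the cycle $C$ is ``horizontal'' $\mathcal{H}^2$-a.e., so via the projection $\pi\colon\R^5\to\R^4$ (an isometry on each $\operatorname{Ker}\zeta$ after the metric normalization, cf.\ Remark \ref{oss:parall}) I would push forward to obtain an integral $2$-cycle $\pi_\# C$ in $\R^4$ whose approximate tangent planes are invariant under the pushed-forward endomorphism. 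The Lagrangian condition \eqref{eq:Jlagr} is exactly what guarantees that the pushforward is well-defined as an almost complex structure on $\R^4$ compatible in the anti-sense with $d\zeta$, and that $C$ is recovered from $\pi_\#C$ together with the ``moment'' data dictated by $\zeta$ (the Legendrian lift condition $dt=y_1dx^1+y_2dx^2$ along $C$).

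The second step is to show that $\pi_\#C$ is a $J'$-holomorphic integral cycle in $(\R^4,J')$ for the induced almost complex structure $J'$, or at least semi-calibrated by the associated $(1,1)$-form, and hence — by the regularity theory for such cycles (the two-dimensional case, as in the $S^5$ analysis of \cite{BR}, and ultimately the structure theory going back to \cite{HL} together with pseudoholomorphic curve regularity) — that $\pi_\#C$ is, away from isolated points, a smooth $J'$-holomorphic curve with integer multiplicities. Here the key analytic input is the unique continuation / interior regularity for two-dimensional almost complex cycles: tangent cones are holomorphic, multiplicities are positive, and branch points are isolated.

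The third step is to lift the regularity back up to $\mathcal{M}$. On the smooth part, $\pi_\#C$ is a $J'$-holomorphic curve $\Sigma\subset\R^4$; I would reconstruct $C$ as the Legendrian lift of $\Sigma$, i.e.\ solve $dt=y_1\,dx^1+y_2\,dx^2$ along $\Sigma$. The obstruction to a global single-valued lift is the periods of the $1$-form $y_1\,dx^1+y_2\,dx^2$ on $\Sigma$; but since $\Sigma$ is $J'$-invariant and $d\zeta=dx^1dy^1+dx^2dy^2$ restricts to zero on horizontal Lagrangian planes (this is precisely \eqref{eq:Jlagr} again), the form $y_1dx^1+y_2dx^2$ is closed on $\Sigma$, and a monodromy/residue argument around the isolated singular points shows the lift closes up (the homology class of $C$ being trivial as a cycle forces the periods to vanish). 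This produces a smooth Legendrian curve agreeing with $C$ off the isolated bad set, and since integral cycles supported on a smooth surface with the given multiplicity are determined, $C$ is the integration current of that Legendrian curve.

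The main obstacle I anticipate is the second step: establishing that the horizontal hypothesis on tangent planes, together with the Lagrangian condition on $J$, actually forces $\pi_\#C$ to be (semi-)calibrated in $\R^4$ with no loss of mass under projection, and then running the two-dimensional regularity machinery in the merely $C^\infty$ (not integrable) almost complex setting — controlling tangent cones, proving they are unions of holomorphic disks, and showing singularities are isolated. The non-integrability of $J$ and the fact that $d\alpha$ is only anti-compatible (not compatible) with $J$ mean one cannot directly cite complex-analytic results; one needs the pseudoholomorphic-cycle regularity theory, which is the technical heart and presumably where the bulk of the paper's work lies. A secondary subtlety is checking that $\pi$ does not collapse mass, i.e.\ that $C$ has no vertical part — this follows from the footnote observation that $J$-invariant planes are horizontal, but one must rule out a lower-dimensional ``vertical'' contribution to the rectifiable structure, which is automatic since $C$ is $2$-rectifiable with a.e. horizontal tangent plane.
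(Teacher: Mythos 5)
Your reduction to dimension four does not work, and the point where it breaks is exactly the difficulty that drives the paper's actual argument. You claim that the Lagrangian condition (\ref{eq:Jlagr}) ``guarantees that the pushforward is well-defined as an almost complex structure on $\R^4$''. It does not: (\ref{eq:Jlagr}) constrains the pointwise algebraic shape of $J$ (as in (\ref{eq:Jexpr})), but the coefficients $\sigma,\beta,\gamma,\delta$ there are functions of all five coordinates, in particular of $t$, so $J$ varies along each Reeb fiber $\pi^{-1}(q)$ and there is no induced structure $J'$ on $\R^4$ for which $\pi_\# C$ would be pseudoholomorphic. The paper states this explicitly (``$J$ cannot be projected onto $\R^4$'') and identifies it as the reason the problem is genuinely five-dimensional: the disk one seeks must be found together with its Legendrian lift, which is why the relevant PDE becomes a second-order perturbed Laplace equation rather than a first-order Cauchy--Riemann system as in the four-dimensional work of Taubes and Rivi\`ere--Tian. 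Even setting this aside, passing to $\pi_\# C$ loses the information you need: distinct sheets of $C$ sitting at different heights $t$ over the same Lagrangian project onto overlapping or intersecting curves, so the pushforward can acquire singularities and multiplicities unrelated to those of $C$, and regularity of $\pi_\# C$ neither implies nor is implied by regularity of $C$; your step three cannot reconstruct the stratification of $C$ from the multiplicity function of the projected current.

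What the paper actually does, and what is missing entirely from your proposal, is the construction in $\R^5$ itself of embedded $J$-invariant Legendrian disks through every point with every prescribed $J$-invariant tangent (Proposition \ref{Prop:uno}, via a contraction-mapping argument for equation (\ref{eq:lapl}) in coordinates adapted to the data), followed by the assembly of these disks along Reeb fibers into three-dimensional ``polar'' and ``parallel'' foliations (Propositions \ref{Prop:polar} and \ref{Prop:parallel}) enjoying the positive intersection property with any $J$-invariant Legendrian. These foliations are the substitute for the complex hypersurface foliations of the four-dimensional theory, and only with them in hand can one run the blow-up, multivalued-graph, unique continuation and degree arguments of \cite{BR}. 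Without an analogue of this construction your outline has no mechanism for controlling tangent cones or isolating singularities.
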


\medskip

In \cite{BR}, together with T. Rivi\`ere, we proved the corresponding regularity property for Special Legendrian Integral cycles\footnote{These cycles are briefly described in section \ref{ex}, where the reader may also find other examples where theorem \ref{thm:main} applies.} in $S^5$. From Proposition 2 in \cite{BR}, it follows that theorem \ref{thm:main} applies in particular to Special Legendrians in $S^5$ and therefore generalizes that result (also compare Proposition \ref{Prop:calibr} of the present paper, where we describe a direct application of this theorem to semi-calibrations).

\medskip

In this work we looked for a natural general setting in which an analysis analogous to the one in \cite{BR} could be performed. This lead to the assumptions taken above, in particular to conditions (\ref{eq:Jlagr}) and (\ref{eq:anticomp}).

The key ingredient that we need for the proof of theorem \ref{thm:main} is the construction of families of $3$-dimensional surfaces\footnote{This existence result is where (\ref{eq:Jlagr}) and (\ref{eq:anticomp}) play a determinant role.} which locally foliate the 5-dimensional ambient manifold and that have the property of \textbf{intersecting positively} the Legendrian, $J$-invariant cycles.
In \cite{BR}, due to the fact that we were dealing with an explicit semi-calibration in a very symmetric situation, the $3$-dimensional surfaces could be explicitly exhibited (section 2 of \cite{BR}). Here we will achieve this by solving, via fixed point theorem, a perturbation of Laplace's equation. After having achieved this, the proof can be completed by following that in \cite{BR} verbatim.

The same idea was present in \cite{RT1}, where, in an almost complex $4$-manifold, the authors produced $J$-holomorphic foliations by solving a perturbed Cauchy-Riemann equation. In the present work, the equation turns out to be of second order and, in order to prove the existence of a solution, we need to work in adapted coordinates (see proposition \ref{Prop:uno} and the discussion which precedes it).

We remark that the proof of the regularity result, an overview of which is presented at the end of section \ref{proof}, basically follows the structure of \cite{Ta} and \cite{RT1}, with the due changes since there is a fifth dimension to deal with, which introduces new difficulties (compare the introduction of \cite{BR}, pages 6-7).

\medskip

The results needed for the proof of theorem \ref{thm:main} are in section \ref{proof} and the reader may go straight to that. In section \ref{formsandJ} we show the existence of $J$-structures satisfying (\ref{eq:Jlagr}) or (\ref{eq:anticomp}) and discuss how they are related to 2-forms, in particular to semi-calibrations. In the last section we discuss examples and possible applications of theorem \ref{thm:main}.

\medskip

\textbf{Aknowledgments}: I am very grateful to Tristan Rivi\`ere for having encouraged me to work on this topic and for always being avaliable for discussion with helpful comments and suggentions.

\section{Almost complex structures and two-forms.}
\label{formsandJ}

\subsection{Self-dual and anti self-dual forms.}

On a contact 5-manifold $(\mathcal{M}, \alpha)$, take an almost-complex structure $I$ compatible with the symplectic form $d \alpha$, and let $g$ be the metric defined by $g(v,w):= d \alpha(v,I w) + \alpha \otimes \alpha$.

The metric $g$ induces a metric on the horizontal sub-bundle $H$, which also inherits an orientation from $\mathcal{M}$. 

Any horizontal two-form can be split in its self-dual and anti self-dual parts as follows. 

Let $\ast$ be the Hodge-star operator acting on the cotangent bundle $T^* \mathcal{M}$. Define the operator
\be
\star : \Lambda^2(T \mathcal{M}) \rightarrow \Lambda^2(T \mathcal{M}), \;\;\;
\star(\beta):= \ast (\alpha \wedge \beta),
\ee
and remark that $\star$ naturally restricts to an automorphism of the space of horizontal forms $\Lambda^2(H)$:
\be
\star : \Lambda^2(H) \rightarrow \Lambda^2(H), \;\;\;
\star(\beta):= \star (\alpha \wedge \beta).
\ee
This operator satisfies $\star^2=id$.

This yields the orthogonal eigenspace decomposition
\be
\Lambda^2(H)= \Lambda_+^2(H) \oplus \Lambda_-^2(H),
\ee
where $\Lambda_{\pm}^2(H)$ is the eigenspace relative to the eigenvalue $\pm 1$ of $\star$. These eigenspaces are referred to as the space of \textit{self-dual} and the space of \textit{anti self-dual} two-forms\footnote{This is basically the definition of anti self-duality used in \cite{T}.}.

\medskip 

In other words, we can restrict to the horizontal sub-bundle with the inherited metric and orientation and define the Hodge-star operator on horizontal forms by using the same definition as the general one, but confining ourselves to the horizontal forms. We get just the $\star$ defined above.

\subsection{From a two-form to $J$.}

In a contact 5-manifold $(\mathcal{M}, \alpha)$, given a horizontal two-form $\omega$ (with some conditions), is there an almost complex structure compatible with $\omega$ and satisfying $d \alpha (J v, v)= 0 \; \text{ for any } v \in H$ ?

\medskip

In this section, by answering positively the above question, we will also estabilish the \underline{existence} of such \textit{anti-compatible} almost complex structures.

\medskip

Assume that, on a contact 5-manifold $(\mathcal{M}, \alpha)$, a two-form $\omega$ is given, which satisfies 
\be
\label{eq:orthogforms}
\omega \wedge d \alpha =0 
\ee
and 
\be
\label{eq:nonzerocond}
\omega \wedge \omega \neq 0.
\ee
Conditions (\ref{eq:orthogforms}) and (\ref{eq:nonzerocond}) automatically give that $\om$ is horizontal\footnote{This can be checked in coordinates pointwise. Alternatively one can adapt the proof of \cite{CS}, Proposition 2.}, $\iota_{R_\alpha}\om=0$.
Without loss of generality, we may assume that 
\be
\label{eq:positiveorient}
\omega \wedge \omega = f (d\alpha)^ 2 \text{ for a strictly positive}\footnote{Indeed, the non-zero condition in (\ref{eq:nonzerocond}) implies that (\ref{eq:positiveorient}) holds with $f$ either everywhere positive or everywhere negative. The case $f<0$ can be treated after a change of orientation on $\mathcal{M}$ just in the same way.} \text{ function $f$.}
\ee

Take an almost-complex structure $I$ compatible with the symplectic form $d \alpha$, and let $g=g_{d\alpha, I}$ be the metric defined by $g(v,w):= d \alpha(v,I w) + \alpha \otimes \alpha$.

Decompose $\omega=\omega_+ + \omega_-$, where $\omega_+$ is the self-dual part and $\omega_-$ is the anti self-dual part. By definition $d\alpha$ is self-dual for $g$, so we have $$\omega_- \wedge d\alpha=\langle \omega_- , d\alpha \rangle d vol_g =0 $$
since $\Lambda_+^2$ and $\Lambda_-^2$ are orthogonal subspaces. Therefore (\ref{eq:orthogforms}) can be restated as

\be
\omega_+ \wedge d \alpha =0.
\ee

Consider now the form\footnote{The notation $\|\;\|$ denotes here the standard norm for differential forms coming from the metric on the manifold. It should not be confused with the comass, which is denoted by $\|\;\|_*$. They are in general different: for example, in $\R^4$ with the euclidean metric and standard coordinates $(x_1, x_2, x_3, x_4)$, the $2$-form $\beta=dx^1 \wedge dx^2 + dx^3 \wedge dx^4$ has norm $\| \beta\|=\sqrt{2}$ and comass $\| \beta\|_*=1$.} $$\omtp := \frac{\sqrt{2}}{\|\omega_+\|}\omega_+ .$$ 
It is self-dual and of norm $\sqrt{2}$, so there is an unique almost complex structure on the horizontal bundle which is compatible with $g$ and $\omtp$. It is defined by
$$J:=g^{-1}(\omtp).$$

We want to show that $d \alpha (J v, v)= 0 \; \text{ for any } v \in H$.

To this aim, it is enough to work pointwise in coordinates. We can choose an orthonormal basis for $H$ (at the chosen point) of the form $\{e_1=X, e_2=IX, e_3=Y, e_4=IY\}$ and denote by 

\be
\label{eq:basisI}
\{e^1, e^2, e^3, e^4\}
\ee

 the dual basis of orthonormal one-forms. Then $d \alpha$ has the form $e^{12} + e^{34}$, where we use $e^{ij}$ as a short notation for $e^i \wedge e^j$. The forms $e^{12} + e^{34}$, $e^{13} + e^{42}$ and $e^{14} + e^{23}$ are an orthonormal basis for $\Lambda_+^2$. The fact that $\omega_+$ is orthogonal to $d \alpha$ implies that 
\be
\omega_+ = a (e^{13} + e^{42}) + b (e^{14} + e^{23}).
\ee
and $\|\omega_+\|^2= 2(a^2 + b^2)$, therefore $\omtp= \cos \theta (e^{13} + e^{42}) + \sin \theta(e^{14} + e^{23}) $ for some $\theta$ depending on the chosen point, $\cos \theta= \frac{a}{\sqrt{a^2+b^2}}$, $\sin \theta= \frac{b}{\sqrt{a^2+b^2}}$. Then the explicit expression for $J$ is
\be
\label{eq:Jass}
\begin{array}{c}
J(e_1)=\cos \theta e_3 + \sin \theta e_4 \\
J(e_2)=-\cos \theta e_4 + \sin \theta e_3 \\
J(e_3)=-\cos \theta e_1 - \sin \theta e_2 \\
J(e_4)=\cos \theta e_2 - \sin \theta e_1  \end{array}
\ee
and an easy computation shows that $d \alpha(v, J(v))=0$ for any $v \in H$.

\medskip

The almost complex structure $J$ is also compatible with $\omega_+$, since this form is just a scalar multiple of $\omtp$, and the metric associated to $(\omtp, J)$ is $\frac{\|\omega_+\|}{\sqrt{2}} g$ when restricted to the horizontal bundle.

\medskip
\medskip

Let us now look at $\omega_-$. It is interesting to observe that 
\be
\label{eq:minusinvariance}
\omega_-(v,w) = \omega_-(J v,J w).
\ee
This can be once again checked pointwise in coordinates, as above. An orthonormal basis for $\Lambda_+^2$ is given by the forms $e^{12} - e^{34}$, $e^{13} - e^{42}$ and $e^{14} - e^{23}$, therefore $\omega_-$ is a linear combination of these forms, each of which can be checked to satisfy the invariance expressed in (\ref{eq:minusinvariance}) with respect to $J$.

On the other hand, these anti self-dual forms do not give a positive real number when applied to $(v, Jv)$ for an arbitrary $v \in H$. However, due to (\ref{eq:positiveorient}) we can show that $\om(v, Jv) = \om _+(v, Jv) + \om _-(v, Jv) >0$ for any $v$.

Indeed, write again
\be
\label{eq:selfdualpart}
\om_+ = a (e^{13} + e^{42}) + b (e^{14} + e^{23}) ,
\ee

\be
\label{eq:antiselfdualpart}
\om_- = A (e^{12} - e^{34}) + B (e^{13} - e^{42}) + C (e^{14} - e^{23}) .
\ee

So we can compute
\be
\label{eq:computewedge}
\om_+ \wedge \om_+= (a^2 + b^2) (d\alpha)^2 \;\; \text{ and } \;\; \om_- \wedge \om_-= -(A^2+B^2+ C^2) (d\alpha)^2.
\ee

Condition (\ref{eq:positiveorient}), recalling that $\om_+ \wedge \om_-=0$, then reads
\be
f (d\alpha)^2 = \om_+ \wedge \om_+ + \om_- \wedge \om_- = \left((a^2 + b^2) - (A^2+B^2+ C^2)\right)(d\alpha)^2 
\ee
with a positive $f$, so $(a^2 + b^2) > (A^2+B^2+ C^2)$.
\medskip
Observe that $$\om _-(e_i, J(e_i))= \pm B \cos \theta \pm C \sin \theta,$$ with $\cos \theta= \frac{a}{\sqrt{a^2+b^2}}$, $\sin \theta= \frac{b}{\sqrt{a^2+b^2}}$. We can bound $\pm B \cos \theta \pm C \sin \theta \leq \sqrt{B^2+C^2}$, so

 $$\om(e_i, J(e_i))=\om _+(e_i, J(e_i)) + \om _-(e_i, J(e_i))= \sqrt{a^2+b^2} + \om _-(e_i, J(e_i))$$ $$\geq  \sqrt{a^2+b^2} - \sqrt{A^2+B^2+C^2} >0.$$

This means that the almost complex structure $J$ is compatible with $\om$ in the sense of (\ref{eq:compatibility}) and they induce a metric $\tilde{g}(v,w):=\om(v,Jw)$ for which $J$ is orthogonal and $\om$ is self dual and of norm $\sqrt{2}$. This gives a positive answer to the question raised in the beginning of this section.

\medskip

Moreover we get

\begin{Prop}
Given a contact 5-manifold $(\mathcal{M}, \alpha)$, there exist almost complex structures $J$ such that $d \alpha(v,Jv) =0$ for all horizontal vectors $v$.
\end{Prop}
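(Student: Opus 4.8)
The plan is to construct $J$ globally by assembling the pieces already developed in this section. The only missing input for the construction above is a horizontal two-form $\omega$ satisfying (\ref{eq:orthogforms}) and (\ref{eq:nonzerocond}); once such an $\omega$ exists, the discussion preceding the Proposition produces the desired $J$ pointwise, and since all the operations involved (Hodge star with respect to a fixed metric, taking self-dual parts, normalizing by $\|\omega_+\|$, inverting $g$) are smooth, the resulting $J$ is a smooth bundle endomorphism. So the real task is the existence of $\omega$.

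First I would fix an auxiliary almost complex structure $I$ compatible with $d\alpha$ — such an $I$ exists by the standard contractibility argument for the space of compatible almost complex structures on a symplectic vector bundle, applied fibrewise to $(H, d\alpha|_H)$ — and let $g = g_{d\alpha,I} + \alpha\otimes\alpha$ be the associated metric. This gives the splitting $\Lambda^2(H) = \Lambda^2_+(H)\oplus\Lambda^2_-(H)$ with $d\alpha$ spanning a sub-line of $\Lambda^2_+(H)$ (in fact $d\alpha$ has constant norm $\sqrt 2$). Working in a local orthonormal frame $\{e_1=X, e_2=IX, e_3=Y, e_4=IY\}$ as above, the forms $e^{13}+e^{42}$ and $e^{14}+e^{23}$ span the $g$-orthogonal complement of $d\alpha$ inside $\Lambda^2_+(H)$; this is a rank-2 sub-bundle $V$ of $\Lambda^2_+(H)$, canonically defined (independently of the frame) as $(\R\, d\alpha)^{\perp}\cap\Lambda^2_+(H)$. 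Any nowhere-vanishing section $\omega_+$ of $V$ then satisfies $\omega_+\wedge d\alpha = \langle\omega_+, d\alpha\rangle\,dvol_g = 0$ and, by the computation (\ref{eq:computewedge}), $\omega_+\wedge\omega_+ = (a^2+b^2)(d\alpha)^2 \neq 0$; setting $\omega := \omega_+$ (no anti-self-dual part) gives a form satisfying (\ref{eq:orthogforms}), (\ref{eq:nonzerocond}) and automatically (\ref{eq:positiveorient}) with $f = a^2+b^2 > 0$. Hence the existence of $J$ reduces to the existence of a global nowhere-zero section of the rank-2 real vector bundle $V$.

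The hard part, then, is producing this global section, i.e.\ showing $V$ is trivial (or at least admits a nowhere-vanishing section). Here I would observe that $V$ carries a natural complex line bundle structure: the endomorphism $\beta\mapsto \star(d\alpha\wedge\cdot)$ — or more concretely the map sending $e^{13}+e^{42}$ to $e^{14}+e^{23}$ — squares to $-\mathrm{id}$ on $V$ and makes $V$ a complex line bundle $L$. A cleaner route avoiding Euler-class computations: note that choosing a unit section of $V$ is exactly the same as choosing an almost complex structure $J$ on $H$ which is $g$-orthogonal and anti-commutes appropriately with $I$ in the Lagrangian sense; and such $J$ are in bijection (over each point) with the choice of a complex structure on the oriented $2$-plane bundle obtained from the $I$-action. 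Concretely, I would decompose $(H, I)$ as a rank-$2$ complex bundle, and realize that the Lagrangian-type $J$'s anti-commute with $I$, so that specifying $J$ amounts to an $\mathbb{R}$-linear isomorphism interchanging the two $\pm i$-eigenspaces of $I$ — and the bundle of such isomorphisms is an affine/torsor bundle with contractible (indeed $S^1$, but orientably so) fibres over a base where the obstruction vanishes because $H$ as a complex bundle splits off the required factor. The most robust argument is simply: the space of $J$'s satisfying (\ref{eq:Jlagr}) and compatible with $g$ is, fibrewise, non-empty and connected (it is a circle), and the bundle of such $J$'s over $\mathcal M$ has structure group reducible so that a global section exists; alternatively one invokes that $\mathcal M^5$ is an open manifold (or has vanishing relevant cohomology) — but since the paper only asserts existence, I expect the author to argue existence of the auxiliary $I$ and then present the local formula (\ref{eq:Jass}) together with a remark that a global choice can be made, the genuine obstruction being the triviality of the rank-$2$ bundle $V$, which I would therefore flag as the step requiring care. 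Once a global nowhere-zero $\omega_+$ is fixed, set $\omtp = \frac{\sqrt 2}{\|\omega_+\|}\omega_+$ and $J = g^{-1}(\omtp)$; the computation leading to (\ref{eq:Jass}) then shows $d\alpha(v, Jv) = 0$ for all horizontal $v$, completing the proof. \cqfd
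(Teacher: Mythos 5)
Your reduction of the Proposition to the existence of a two-form $\omega$ satisfying (\ref{eq:orthogforms}) and (\ref{eq:positiveorient}) is exactly the paper's strategy, and your local analysis is correct and in fact sharper than what the paper writes: the admissible self-dual choices are precisely the nowhere-vanishing sections of the rank-$2$ bundle $V=(\R\,d\alpha)^{\perp}\cap\Lambda^2_+(H)$, which is the realification of the complex line bundle $\Lambda^{2,0}(H,I)$ (in the frame of (\ref{eq:basisI}) one has $dz^1\wedge dz^2=(e^{13}+e^{42})+i(e^{14}+e^{23})$), so the obstruction you identify is the Euler class of $V$, i.e.\ $\pm c_1(H,I)$. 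The genuine gap is that you never dispose of this obstruction. None of the arguments you offer works: ``fibrewise non-empty and connected'' does not yield a global section, since the fibre of unit sections is a circle, which is not contractible (your own parenthesis ``contractible (indeed $S^1$\dots)'' is self-contradictory); ``$\mathcal{M}^5$ is an open manifold'' is false for the paper's principal example $S^5$; and ``the structure group is reducible so that a global section exists'' is precisely the assertion to be proved. As written, the proposal stops one step short of a proof.

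The paper globalizes differently: it takes $\omega$ in local Darboux charts and glues with a partition of unity. You should note that this is not a convexity-for-free argument either — by (\ref{eq:selfdualpart})--(\ref{eq:computewedge}) the fibrewise admissible set is the open cone $\{a^2+b^2>A^2+B^2+C^2\}$, which is connected but not convex (both $\pm(e^{13}+e^{42})$ are admissible while their average vanishes), so the sum $\sum_i\rho_i\omega_i$ is admissible only if the local choices are made coherently — which is again a statement about $V$. So your instinct to flag the global step as the delicate one is sound; but a review of your proof has to conclude that the delicate step is flagged rather than carried out. To complete your route you would need either to prove that $V$ admits a nowhere-vanishing section in the situation at hand (as it does in all the examples of section \ref{ex}, where a global $\omega$ is exhibited explicitly), or to replace the circle of $g$-orthogonal anti-compatible $J$'s by a genuinely contractible fibrewise family and argue that a section exists for that enlarged bundle.
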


Indeed, we can get a two-form $\omega$ satisfying  (\ref{eq:orthogforms}) and (\ref{eq:positiveorient}). This can be done locally\footnote{For example, work on an open ball where we can apply Darboux's theorem (see \cite{B} or \cite{MS}), which allows us to work with the standard contact structure of $\R^5$ described in the introduction (compare the explanation at the beginning of section \ref{posfol}).} and then we can get a global form by using a partition of unity on $\mathcal{M}$. The previous discussion in this subsection then shows how to construct the requested almost complex structure from $\om$, thereby proving that anti-invariant almost complex structures exist.

Also remark that the almost complex structure $J$ anti-compatible with $d \alpha$ that we constructed is orthogonal for the metric $g$ associated to $d \alpha$ and $I$. Indeed, after having built the two-form $\omega$ satisfying  (\ref{eq:orthogforms}) and (\ref{eq:positiveorient}), we defined $J$ from its self-dual part (suitably rescaled) and from the metric $g$.

By changing the almost complex structure $I$ compatible with $d \alpha$, we can get different anti-compatible structures.

\medskip

We conclude with the following proposition, which gives a condition to ensure the applicability of theorem \ref{thm:main} to a semi-calibration.

\begin{Prop}
\label{Prop:calibr}
Let $(\mathcal{M}, \alpha)$ be a contact 5-manifold, with a metric $g$ defined\footnote{This is equivalent to asking that $d \alpha $ is self-dual and of norm $\sqrt{2}$ for $g$} by $g=d \alpha(\cdot, I \cdot)$ for an almost complex structure $I$ compatible with $d \alpha$.

Let $\om$ be a two-form of comass $1$, $\|\om\|_*=1$, such that:
$$\om \wedge d \alpha=0, \;\; \om \wedge \om= (d \alpha)^2. $$
Then $\om$ is self-dual with respect to $g$ and there is an almost complex structure $J$ anti-compatible with $d\alpha$ such that the (semi)-calibrated two-planes are exactly the $J$-invariant ones. 
\end{Prop}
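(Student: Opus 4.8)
The plan is to extract, from the two algebraic hypotheses on $\om$, that $\om$ is horizontal and self-dual for $g$; then to feed $\om$ into the construction of the previous subsection to obtain $J$; and finally to identify $\mathcal{G}(\om)$ with the $J$-invariant planes via the equality case of Wirtinger's inequality.

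First I would note, as recalled after (\ref{eq:nonzerocond}), that $\om\wedge d\al=0$ and $\om\wedge\om\ne0$ force $\iota_{R_\al}\om=0$, so $\om$ is a horizontal form and decomposes as $\om=\om_++\om_-$ relative to $g$ and the orientation $H$ inherits from $\mathcal{M}$. Working pointwise in an $I$-adapted orthonormal coframe $\{e^1,\dots,e^4\}$ of $H$ with $d\al=e^{12}+e^{34}$, I would expand $\om_\pm$ as in (\ref{eq:selfdualpart})--(\ref{eq:antiselfdualpart}), so that $\|\om_+\|^2=2(a^2+b^2)$ and $\|\om_-\|^2=2(A^2+B^2+C^2)$, while (\ref{eq:computewedge}) together with $\om_+\wedge\om_-=0$ gives
\[
\om\wedge\om=\tfrac12\big(\|\om_+\|^2-\|\om_-\|^2\big)(d\al)^2 .
\]
Next I would show that the comass of $\om$ is attained only on horizontal $2$-planes and equals the comass of $\om|_H$: writing a unit simple $2$-vector $\xi=v\wedge w$ of $T\mathcal{M}$ in terms of the horizontal and Reeb parts of $v,w$ and using $R_\al\perp_g H$, one gets an orthogonal decomposition $\xi=v^H\wedge w^H+\eta\wedge R_\al$, hence $|v^H\wedge w^H|\le1$, while horizontality gives $\langle\om,\xi\rangle=\langle\om|_H,v^H\wedge w^H\rangle$; thus $\|\om\|_*=\|\om|_H\|_*$, and equality $\langle\om,\xi\rangle=1$ forces $\xi$ to be a horizontal unit simple $2$-vector with $\langle\om|_H,\xi\rangle=1$. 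Finally, for a $2$-form on an oriented Euclidean $4$-space the classical identity $\|\om|_H\|_*=\tfrac1{\sqrt2}\big(\|\om_+\|+\|\om_-\|\big)$ holds (put the form in block-diagonal form $\la_1e^{12}+\la_2e^{34}$).

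Combining $\|\om\|_*=1$ with $\om\wedge\om=(d\al)^2$ I would get $\|\om_+\|+\|\om_-\|=\sqrt2$ and $\|\om_+\|^2-\|\om_-\|^2=2$; dividing the second relation by the first gives $\|\om_+\|-\|\om_-\|=\sqrt2$, whence $\|\om_+\|=\sqrt2$ and $\|\om_-\|=0$. Hence $\om=\om_+$ is self-dual of norm $\sqrt2$, i.e. $\om=\omtp$ in the notation of the previous subsection, and I then set $J:=g^{-1}(\om)$. By the computation leading to (\ref{eq:Jass}) this $J$ is an almost complex structure on $H$ with $d\al(v,Jv)=0$ for all $v\in H$, i.e. anti-compatible with $d\al$; and since $\|\om_+\|=\sqrt2$, the metric associated to $(\om,J)$ is exactly $g$ on $H$, so $g(v,w)=\om(v,Jw)$ and $\om$ is the fundamental form of the almost-Hermitian structure $(g|_H,J)$. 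For the last claim: if $P\subset H$ is $J$-invariant and $v\in P$ is $g$-unit, then $\{v,Jv\}$ is a $g$-orthonormal basis of $P$ and $\langle\om,v\wedge Jv\rangle=\om(v,Jv)=g(v,v)=1$, so $P$ (oriented by $v\wedge Jv$) belongs to $\mathcal{G}(\om)$; conversely, by the sharpness analysis above every $\xi\in\mathcal{G}(\om)$ is a horizontal unit simple $2$-vector with $\langle\om|_H,\xi\rangle=1$, and the equality case of Wirtinger's inequality for the K\"ahler form $\om|_H$ of $(g|_H,J)$ forces the underlying plane to be $J$-invariant. Thus $\mathcal{G}(\om)$ consists exactly of the (suitably oriented) $J$-invariant $2$-planes.

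The one genuinely substantive step is obtaining $\|\om_-\|=0$: it rests on the sharp comass identity $\|\cdot\|_*=\tfrac1{\sqrt2}(\|\cdot_+\|+\|\cdot_-\|)$ for $2$-forms on an oriented Euclidean $4$-space together with the expression for $\om\wedge\om$ in terms of $\|\om_\pm\|$. The reduction of the comass to horizontal planes and the equality case of Wirtinger's inequality are routine, and once $\om$ is known to be self-dual of norm $\sqrt2$ everything else is supplied verbatim by the construction of the previous subsection.
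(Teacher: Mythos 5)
Your proof is correct, and its overall architecture coincides with the paper's: decompose $\om=\om_++\om_-$ in an $I$-adapted coframe, use comass $1$ together with $\om\wedge\om=(d\al)^2$ to kill $\om_-$, set $J:=g^{-1}(\om)$, and identify $\mathcal{G}(\om)$ with the $J$-invariant planes via the equality case of $\om(v,w)=\langle v,-Jw\rangle_g$. The one step you handle differently is the vanishing of $\om_-$: the paper tests $\om$ against the single unit simple $2$-vector $\frac{1}{\sqrt{a^2+b^2+A^2}}\,e_1\wedge(a e_3+b e_4+A e_2)$ to get $a^2+b^2+A^2\le 1$, which combined with $a^2+b^2-A^2-B^2-C^2=1$ forces $A=B=C=0$; you instead invoke the exact comass identity $\|\cdot\|_*=\tfrac{1}{\sqrt2}(\|\cdot_+\|+\|\cdot_-\|)$ on an oriented Euclidean $4$-space and solve the resulting pair of scalar equations. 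Both are valid; the paper's route is more elementary (one test vector, no sharp comass formula), while yours is conceptually cleaner, yields $\|\om_+\|=\sqrt2$ directly, and has the additional merit of making explicit the reduction of the comass from $T\mathcal{M}$ to the horizontal distribution (using $R_\al\perp_g H$ and $\iota_{R_\al}\om=0$), a point the paper leaves implicit.
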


Therefore theorem \ref{thm:main} applies to such an $\om$, yielding the regularity of $\om$-(semi)calibrated cycles. The Special Legendrian semi-calibration treated in \cite{BR} fulfils the requirements of Proposition \ref{Prop:calibr}.

\begin{proof}[\bf{proof of proposition \ref{Prop:calibr}}]
Decompose $\om= \om_+ + \om_-$ as in (\ref{eq:selfdualpart}) and (\ref{eq:antiselfdualpart}). Evaluating $\om$ on the unit simple 2-vector
$$\frac{1}{\sqrt{a^2+b^2+A^2}}e_1 \wedge (a e_3 + b e_4 + A e_2)$$
we get $\sqrt{a^2+b^2+A^2}$. The condition $\|\om\|_*=1$ implies $a^2+b^2+A^2 \leq 1$.

On the other hand, expliciting $\om \wedge \om= (d \alpha)^2$ as in (\ref{eq:computewedge}), we obtain
$$a^2+b^2-A^2-B^2-C^2=1. $$

Hence $-B^2-C^2 \geq 2 A^2$, which trivially yields $A=B=C=0$, so $\om$ is a self-dual form.

A self-dual form of comass $1$ expressed as in (\ref{eq:selfdualpart}) must be of the form
$$\cos \theta (e^{13} + e^{42}) + \sin \theta(e^{14} + e^{23})$$
and the corresponding almost-complex structure $J=g^{-1}(\om)$ has the expression (\ref{eq:Jass}) and is anti-compatible with $d \alpha$. 

Take two orthonormal vectors $v,w$. Since $\om(v,w)=\langle v, -Jw \rangle_g$, we have that 
$$\om(v,w)=1 \Leftrightarrow w=Jv ,$$
from which we can see that the semi-calibrated 2-planes are exactly those that are $J$-invariant for this $J$.

\end{proof}

\subsection{From $J$ to a two-form.}
\label{j2f}

In this subsection, we want to answer the following question, in some sense the natural reverse to the one raised in the previous subsection.

\medskip

Assume that, on a contact 5-manifold $(\mathcal{M}, \alpha)$, an almost-complex structure $J$ on the horizontal distribution $H$ is given, which satisfies $d \alpha (J v, v)= 0 \; \text{ for any } v \in H$. Is there a two-form which is compatible with $J$ in the sense of (\ref{eq:compatibility})?

\medskip

Let $I$ be an almost-complex structure compatible with the symplectic form $d \alpha$, and let $g$ be the metric on $\mathcal{M}$ defined by $g(v,w):= d \alpha(v,I w) + \alpha \otimes \alpha$. 

Define a two-form $\Omega$ by
\[\Omega(X,Y):=d \alpha(JX,\frac{1}{2}(JI-IJ)Y).\]
We can see that $\Omega$ is compatible with $J$ as follows

$$\begin{array}{c} \bullet \;\; \Omega(JX,JY) = \frac{1}{2} d \alpha(JX,JIJ Y +IY)= \\
\\
- d \alpha(X,\frac{1}{2} IJ Y) + d \alpha(X,\frac{1}{2}JIY)=\Omega(X,Y),\end{array}$$

$$\begin{array}{c} 
\bullet \;\; \Omega(X,JX) = \frac{1}{2} d \alpha(X,JIJ X +IX)= \\
\\
=\frac{1}{2} d \alpha(JX,IJX)+ \frac{1}{2} d \alpha(X,IX) >0, \end{array}$$

where we used the anti-compatibility property (\ref{eq:anticomp}).
It also follows that the metric $\tilde{g}(X,Y):=\Omega(X,JY) + \alpha \otimes \alpha$ is related to $g$ by $\tilde{g}(X,Y)=\frac{1}{2}(g(X,Y)+g(JX,JY))$ when restricted to the horizontal sub-bundle. 

\medskip

In local coordinates, also just pointwise for a basis of the form $\{e_1=X, e_2=IX, e_3=Y, e_4=IY\}$ as in (\ref{eq:basisI}), it can be checked by a direct computation that $\Om$ also satisfies $\Om \wedge d\alpha=0$.

\medskip

The two-form $\Omega$ is a semi-calibration on the manifold $\mathcal{M}$ endowed with the metric $\tilde{g}$. Indeed, since $J$ preserves the $\tilde{g}$-norm, for any two vectors $v,w$ at $p$ which are orthonormal with respect to $\tilde{g}$

$$\Omega(v,w)= \langle v,-J w\rangle_{\tilde{g}} \leq |v|_{\tilde{g}} |Jw|_{\tilde{g}}=|v|_{\tilde{g}} |w|_{\tilde{g}}=1.$$

and equality is realized if and only if $Jv=w$. This means that a 2-plane is $\Omega$-calibrated if and only if it is $J$-invariant, so theorem \ref{thm:main} applies to $\Omega$-semicalibrated cycles\footnote{We remark here that, being semi-calibrated, such a cycle will satisfy an almost-monotonicity formula at every point, as explained in \cite{PR}.}. 

\medskip

If $J$ is an orthogonal transformation with respect to $g$, the anti-compatibi\-lity with $d\alpha$ implies, for all horizontal vectors $X, Y$

$$\left\{\begin{array}{c} g(X,Y) = g(JX,JY)  \\
=d \alpha(JX, IJY)=d \alpha(X, JIJY) \\
=d \alpha(IX, (IJ)^2 Y)= -g(X,(IJ)^2 Y) \end{array} \right. \Rightarrow g(X,(Id+(IJ)^2) Y)=0 $$
so $(IJ)^2=-Id$ when restricted to $H$, therefore $IJ=-JI$. 

Hence $\Omega(X,Y):=d \alpha(X,JIY)$. In this case $\Omega$ is a self-dual form of norm $\sqrt{2}$ and comass $\|\Om\|_*=1$ with respect to $g$ \footnote{Observe that, in this case, we have that pointwise $\{Id, I, J, IJ\}$ form a quaternionic structure.}.

\section{Proof of theorem \ref{thm:main}}
\label{proof}

\subsection{Positive foliations}
\label{posfol}

The regularity property in Theorem \ref{thm:main} is local. It is therefore enough to prove the statement for an arbitrarily small neighbourhood $B^5(p) \subset \mathcal{M}$ of any chosen $p \in \mathcal{M}$.

\medskip

From Darboux's theorem, we know that there is a diffeomorphism\footnote{In the usual terminology, for example see \cite{MS} or \cite{B}, it is called a \textit{contactomorphism} or \textit{contact transformation}.} $\Phi$ from a ball centered at the origin of the standard contact manifold $(\R^5, dt-y_1 dx^1 -y_2 dx^2)$ to such a neighbourhood $B^5(p)$, with $\Phi^* (\alpha)= dt-(y_1 dx^1 +y_2 dx^2)$. The structure $J$ on $\mathcal{M}$ can be pulled back to an almost complex structure on $\R^5$ via $\Phi$:

$$(\Phi^* J)(X) :=  (\Phi^{-1})_* [J (\Phi_*X)] \;\;\text{ for } X\in \R^5. $$

Condition (\ref{eq:Jlagr}) yields

$$d (\Phi ^* \alpha )((\Phi^*J) X, X)= (\Phi^* d\alpha )((\Phi^{-1})_* J(\Phi_* X), X) = $$

\be
= d\alpha (J(\Phi_*X), \Phi_* X)=0 \; \text{ for any } X \text{ horizontal vector in } \R^5.
\ee

Therefore the induced almost complex structure $\Phi ^*J$ is anti-compatible with the symplectic form $d (\Phi ^* \alpha )= dx^1 d y^1 +  dx^2 d y^2$.

\medskip

It is now clear that we can afford to work in a ball centered at $0$ of the standard contact structure $(\R^5, \zeta)$ with an almost complex structure $J$ such that $d \zeta(v, Jv)=0$.

\medskip

In view of the construction of "positive foliations", we can start with the following question: given a point in $\R^5$ and a $J$-invariant plane through it, can we find an embedded Legendrian disk which is $J$-invariant and has the chosen plane as tangent? 

The following is of fundamental importance:

\begin{oss}
\label{oss:data}
Given a legendrian immersion of a 2-surface in $\R^5$, any tangent plane $D$ to it necessarily satisfies the condition $d \alpha (D)=0$ (see $\cite{R}$). (\ref{eq:anticomp}) is therefore a necessary condition for the local existence of $J$-invariant disks through a point in any chosen direction. 

On the other hand, always from \cite{R}, we know that every Lagrangian in $\R^4$ can be uniquely lifted to a Legendrian in $\R^5$ after having chosen a starting point in $\R^5$.

In this subsection we will prove, in particular, the sufficiency of condition (\ref{eq:anticomp}) for the local existence of a $J$-invariant Legendrian for which we assign its tangent at chosen point.
\end{oss}

\medskip

Write $J(0)=J_0$. Let us analyse the case $J=J_0$ everywhere. Then we can explicitly find an embedded Legendrian disk which is $J_0$-invariant and with tangent at $0$ the given $D$. This goes as follows: $J_0$ can be seen as an almost complex structure on $\R^4$ and the plane $D$ is $J_0$-invariant in $\R^4$, and by the condition $d \alpha(D)=0$ it is lagrangian for the symplectic form $d\alpha$. Therefore, by the result in $\cite{R}$, the plane can be lifted to a legendrian surface $\tilde{D}$ in $\R^5$ passing through $0$. This surface is then trivially $J_0$-invariant and the tangent at $0$ is $D$ since $H_0=\R^4$.

What about the case of a general $J$ with $J(0)=J_0$? We want to use a fixed point argument in order to find a $J$-invariant Legendrian close to $\tilde{D}$. To achieve that, we need to ensure that we are working in a neighbourhood where $J-J_0$ is bounded in a suitable $C^{m, \nu}$-norm.

\medskip

Dilate $\R^5$ about the origin as follows:
\[\Lambda_r: (x_1, y_1, x_2, y_2, t) \to \left( \frac{x_1}{r}, \frac{y_1}{r} , \frac{x_2}{r}, \frac{y_2}{r} ,\frac{t}{r}\right).\]
This dilation changes the contact structure: indeed, pulling back the standard contact form by $\Lambda_r^{-1}$ we get
$$ r^2 \left( \frac{1}{r} dt -(y_1 dx^1 +y_2 dx^2) \right)$$
thus the horizontal hyperplanes are 
$$Span \{\p_{x_1}+ r y_1 \p_t, \p_{x_2}+ r y_2 \p_t, \p_{y_1}, \p_{y_2}\}.$$

The dilation has therefore the effect of "flattening" (with respect to the euclidean geometry) the horizontal distribution\footnote{The dilation $\left( \frac{x_1}{r}, \frac{y_1}{r} , \frac{x_2}{r}, \frac{y_2}{r} ,\frac{t}{r^2}\right)$, on the other hand, would leave the horizontal distribution unchanged. This non-homogeneous transformation would still allow the proof of proposition \ref{Prop:uno}, but in view of propositions \ref{Prop:polar} and \ref{Prop:parallel} it is convenient to work with the "flattened" distribution.}. 

We also pull back by $\Lambda_r$ the almost complex structure $J$ and for $r$ small enough we can ensure that $\|\Lambda_r^* J-J_0\|_{C^{2, \nu}}= r \|J-J_0\|_{C^{2, \nu}(B_r)}$ is as small as we want. 

Finding Legendrians in the dilated contact structure that are invariant for $\Lambda_r^* J$ is the same as finding $J$-invariant Legendrians in $(\R^5, \zeta)$ in a smaller ball around $0$: we can go from the first to the second via $\Lambda_r^{-1}$. It is then enough to work in $(\R^5, (\Lambda_r^{-1}) ^*\zeta)$, with the almost complex structure $\Lambda_r^* J$. 

By abuse of notation, we will drop the pull-backs and forget the factor $r^2$; our assumptions, to summarize, will be as follows:
\be
\label{eq:dilatedcontact}
\begin{array}{c}
\alpha=\left( \frac{1}{r} dt -(y_1 dx^1 +y_2 dx^2) \right) \\
d \alpha= dx^1 d y^1 +  dx^2 d y^2 \\
\|J-J_0\|_{C^{2, \nu}(B_1)}= \eps \; \text{ for an arbitrarily small $\eps$.} \end{array}
\ee 

\medskip

\textbf{Basic example}. 
What can we say about an almost complex structure $J$ on $(\R^5, \zeta)$ such that $d \zeta(v, Jv)=0$ (and $d \zeta(v,w)= -d \zeta(Jv,Jw)$) for all horizontal vectors $v$ and $w$? 

These conditions, applied to the vectors $\p_{x_1}+y_1 \p_t, \p_{x_2}+y_2 \p_t, \p_{y_1}, \p_{y_2}$, together with $J^2=-Id$, give that $J$ must have the following coordinate expression for some smooth functions $\sigma, \beta, \gamma, \delta$ of five coordinates\footnote{We assume here that $\gamma \neq 0$. Remark that $\beta$ and $\gamma$ cannot both be $0$, since $J^2=-Id$.}:

\be
\label{eq:Jexpr}
\left \{ \begin{array}{c} 
J(\p_{x_1}+y_1 \p_t)= \sigma (\p_{x_1}+y_1 \p_t) + \beta(\p_{x_2}+y_2 \p_t) +\gamma \p_{y_2}\\
J(\p_{x_2}+y_2 \p_t)= -\sigma (\p_{x_2}+y_2 \p_t) + \delta (\p_{x_1}+y_1 \p_t) - \gamma \p_{y_1} \\ 
J(\p_{y_1})= \sigma \p_{y_1} + \delta \p_{y_2}+ \frac{1+\sigma^2+ \beta \delta}{\gamma} (\p_{x_2}+y_2 \p_t) \\ 
J(\p_{y_2})= -\sigma \p_{y_2} - \frac{1+\sigma^2+ \beta \delta}{\gamma} (\p_{x_1}+y_1 \p_t) + \beta \p_{y_1} . \end{array} \right .
\ee

\medskip

\textbf{Local existence of $J$-invariant Legendrians}. The results that we are going to prove, in particular the proofs of propositions \ref{Prop:uno}, \ref{Prop:polar} and \ref{Prop:parallel}, follow the same guidelines as the proofs presented in the appendix of \cite{RT1}, with the due changes. In particular, equation (\ref{eq:lapl}) takes the place of equation (A.2) in \cite{RT1}. In the $5$-dimensional contact case that we are addressing, therefore, we will face a second order elliptic problem, in contrast to the $4$-dimensional almost-complex case where the equation was of first order.

With remark \ref{oss:data} in mind, we will see that, in order to produce a solution of our problem, we will need to adapt coordinates to the chosen data, i.e. the point and the direction. Later on, with (\ref{eq:estfpX}) and (\ref{eq:estF}), we will understand the dependence on the data for the solutions obtained. 

At that stage we will be able to produce the key tool for the proof of theorem \ref{thm:main}: foliations made of $3$-dimensional surfaces having the property of intersecting any $J$-invariant Legendrian in a positive way, see the discussion following proposition \ref{Prop:parallel}.

\begin{Prop}
\label{Prop:uno}
Let $(\R^5, \alpha)$ be the contact structure described in (\ref{eq:dilatedcontact}),  with $J$ an almost-complex structure defined on the horizontal distribution $H= Ker \; \alpha$ such that $d \alpha (J v, v)= 0 \; \text{ for any } v \in H$. 

Then, if $\eps$ is small enough\footnote{It will be clear after the proof that $\eps$ must be small compared to $\frac{1}{\|J_0\| N^2 }$, where $N$ is a constant depending on an elliptic operator defined from $J_0$.}, for any $J$-invariant 2-plane $D$ passing through $0$, there exists locally an embedded Legendrian disk which is $J$-invariant and goes through $0$ with tangent $D$.
\end{Prop}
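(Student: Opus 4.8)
The plan is to set up a fixed-point argument in an appropriately chosen coordinate frame, perturbing off the constant-structure model $J=J_0$. First I would use the linear automorphism of $\R^5$ that fixes the contact form (up to scale) and sends the given $J$-invariant Lagrangian $2$-plane $D$ to a standard reference plane $D_0$; composing with this normalization we may assume $D=D_0$ is, say, the $(x_1,x_2)$-plane inside $H_0=\R^4$. Since $D$ is both $J_0$-invariant and Lagrangian for $d\alpha$, after this normalization the expression \eqref{eq:Jexpr} for $J$ at the origin is put in a convenient form; concretely, the model $J_0$-invariant Legendrian $\tilde D$ is then a graph $t=t(x_1,x_2)$, $y_i=y_i(x_1,x_2)$ over $D_0$ obtained by the lift construction of \cite{R}, flat to first order at $0$.

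Next I would write the unknown Legendrian as a graph over $D_0$: because any Legendrian in $(\R^5,\zeta)$ is uniquely determined (given a base point) by its projection to a Lagrangian surface in $(\R^4,d\alpha)$ (Remark \ref{oss:data}), the genuine unknown is a map $u\colon D_0\to D_0^{\perp}\cap H_0$ whose graph is Lagrangian, i.e.\ $u$ is (locally) a gradient, $u=\nabla\varphi$; the $t$-component is then recovered by integrating $\alpha=0$ along the surface. The $J$-invariance requirement for the tangent planes of this graph becomes a PDE for $\varphi$. Plugging \eqref{eq:Jexpr} in and separating the $J=J_0$ part from the remainder $J-J_0$, the $J_0$-part is a constant-coefficient second-order elliptic operator $L_0\varphi$ — after the normalization this is a Laplace-type operator — and the full equation has the schematic form
\[
L_0 \varphi = Q(x,\varphi,\nabla\varphi,\nabla^2\varphi),
\]
where $Q$ collects the terms involving $J-J_0$ together with the quadratic graph-nonlinearities, and $\|J-J_0\|_{C^{2,\nu}(B_1)}=\eps$ is small; this is equation \eqref{eq:lapl} playing the role of (A.2) in \cite{RT1}, the essential new feature being that it is second order rather than first.

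I would then solve this by Banach fixed point in a ball of $C^{2,\nu}$ (or a parabolic/elliptic Hölder space adapted to $B_\rho$), using the solvability of the Dirichlet/Cauchy problem for $L_0$ on small balls with the sharp Schauder estimates $\|\varphi\|_{C^{2,\nu}}\le N\|L_0\varphi\|_{C^{0,\nu}}$; the constant $N$ depends only on the elliptic operator built from $J_0$, which is why the threshold for $\eps$ must be small compared to $1/(\|J_0\|N^2)$ as announced in the footnote. One fixes the data — value and tangent at $0$ matching $D_0$ — as the "initial" part of the problem, so that the fixed point automatically passes through $0$ with tangent $D$; working on a sufficiently small ball makes $Q$ a contraction because both the remainder $\eps$ and the Hölder norms of the quadratic terms can be made small there, and embeddedness of the resulting disk follows since it is a small $C^1$-perturbation of the affine plane $D_0$.

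The main obstacle I expect is precisely the choice of adapted coordinates: unlike the first-order Cauchy–Riemann situation of \cite{RT1}, here one must arrange the linearization at $(0,D)$ to be a well-posed second-order elliptic operator with controllable inverse, and this forces one to normalize both the point and the direction simultaneously (this is the content of "we need to adapt coordinates to the chosen data" in the discussion preceding the proposition). Getting the nonlinearity $Q$ into a form where the implicit function / contraction argument closes — in particular checking that the troublesome factor $\frac{1+\sigma^2+\beta\delta}{\gamma}$ in \eqref{eq:Jexpr} stays bounded and contributes only higher-order terms after the normalization — is where the real work lies; once that is done, the fixed-point step is routine.
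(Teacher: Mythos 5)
Your overall strategy for the existence part coincides with the paper's: adapt coordinates to the pair $(0,D)$, reduce to a Lagrangian potential $f$ whose gradient graph lifts to the Legendrian, derive the second-order elliptic equation (\ref{eq:lapl}) from the $J$-invariance of the tangent planes, and close a contraction in a small $C^{2,\nu}$ ball using Schauder estimates; this reproduces Steps 1--3 of the paper's proof, including the role of the normalization that makes $\beta(0)=0$ and $A(0)=0$.

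There is, however, a genuine gap in how you force the disk to pass through $0$ with tangent exactly $D$. You propose to ``fix the data --- value and tangent at $0$ --- as the initial part of the problem, so that the fixed point automatically passes through $0$ with tangent $D$.'' For a scalar second-order elliptic equation this is not a well-posed formulation: passing through $0$ with tangent $D$ amounts to prescribing the $2$-jet of $f$ at the interior point $0$ (namely $\nabla f(0)=0$ and $\nabla^2 f(0)=0$, five real conditions). If you also impose Dirichlet data the problem is overdetermined (the Dirichlet solution is already unique and its jet at $0$ is whatever it is); if you drop the boundary data, the solution set of $Mu=g$ with prescribed $2$-jet at $0$ is infinite-dimensional (add harmonic polynomials of degree $\geq 3$), so there is no canonical solution operator and no a priori $C^{2,\nu}$ control on a fixed disk --- and the Cauchy problem for an elliptic operator, which you mention as an alternative, is ill-posed. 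This is precisely why the paper's Step 3 only produces a $J$-invariant disk $\mathcal{L}_{0,D}$ that is $C^{2,\nu}$-close to $D$ but \emph{need not} contain the origin, and why a separate Step 4 is required: one solves the well-posed Dirichlet problem for a five-parameter family of data $(P,X)\in Z_1\times\mathcal{U}_1$, forms the evaluation map $\Psi(P,X)=(Q,T_Q\mathcal{L}_{P,X})$ recording where each disk actually crosses a transversal $3$-plane and with which tangent, shows via (\ref{eq:estfpX}) that $\Psi$ is a $C^1$-small perturbation of the identity as in (\ref{eq:perturbpsi}), and inverts it to hit the prescribed pair $(0,D)$. Without this (or an equivalent parameter-counting/inverse-function-theorem argument), your construction only yields a $J$-invariant Legendrian near $D$, not one through $0$ with tangent $D$, so the interpolation claim of Proposition \ref{Prop:uno} is not established.
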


From the discussion above, we can see that we are actually showing the following:

\begin{Prop}
\label{Prop:unoM}
Let $\mathcal{M}$ be a five-dimensional manifold endowed with a contact form $\alpha$ and let $J$ be an almost-complex structure defined on the horizontal distribution $H= Ker \; \alpha$ such that $d \alpha (J v, v)= 0 \; \text{ for any } v \in H$. 

Then at any point $p \in \mathcal{M}$ and for any $J$-invariant 2-plane $D$ in $T_p \mathcal{M}$, there exists an embedded Legendrian disk $\mathcal{L}$ which is $J$-invariant and goes through $p$ with tangent $D$.
\end{Prop}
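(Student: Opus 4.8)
The plan is to derive Proposition \ref{Prop:unoM} from Proposition \ref{Prop:uno} by transporting the whole problem to the normalized model (\ref{eq:dilatedcontact}): first a Darboux contactomorphism moves us from $\mathcal{M}$ to the standard contact $\R^5$, and then the dilation $\Lambda_r$ brings us into the situation where $J$ is $C^{2,\nu}$-close to its value at the origin, so that Proposition \ref{Prop:uno} applies. The only thing to check is that each of these two diffeomorphisms transports compatibly the three pieces of data involved: the contact (hence Legendrian) condition, the almost complex structure together with its Lagrangian property $d\alpha(Jv,v)=0$, and the prescribed $J$-invariant tangent plane.

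The existence of an embedded disk through $p$ is a local assertion, so it suffices to work in an arbitrarily small neighbourhood $B^5(p)$. By Darboux's theorem there is a contactomorphism $\Phi$ from a ball about the origin of $(\R^5,\zeta)$ onto $B^5(p)$ with $\Phi^*\alpha=\zeta$. Set $K:=\Phi^*J$, defined by $K(X):=(\Phi^{-1})_*[J(\Phi_*X)]$; the computation carried out in the paragraphs of this subsection preceding (\ref{eq:dilatedcontact}) shows that $K$ is again an almost complex structure on the horizontal distribution of $\zeta$ satisfying $d\zeta(KX,X)=0$, hence anti-compatible with $d\zeta=dx^1dy^1+dx^2dy^2$. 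The given $J$-invariant plane $D\subset T_p\mathcal{M}$ pulls back to the plane $D_0:=(\Phi^{-1})_*D$ at $0\in\R^5$, which is $K$-invariant and therefore (by the footnote to Theorem \ref{thm:main}) horizontal.

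Now apply the dilation $\Lambda_r$ of (\ref{eq:dilatedcontact}) for a small $r>0$. Since $\Lambda_r$ fixes the origin and is linear, it carries $D_0$ to a $\Lambda_r^*K$-invariant $2$-plane through $0$, horizontal for the dilated contact form $\alpha=\frac1r dt-(y_1dx^1+y_2dx^2)$; at the same time, as recorded in the discussion of (\ref{eq:dilatedcontact}), $\|\Lambda_r^*K-(\Lambda_r^*K)(0)\|_{C^{2,\nu}(B_1)}\to 0$ as $r\to 0$, so by taking $r$ small we make it smaller than the threshold $\eps$ required by Proposition \ref{Prop:uno}. Writing $J_0:=(\Lambda_r^*K)(0)$, we are then exactly in the hypotheses of Proposition \ref{Prop:uno}, which yields an embedded Legendrian disk $L$ in the dilated standard structure, $\Lambda_r^*K$-invariant, passing through $0$ with the prescribed tangent plane.

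It remains to transport $L$ back. Applying $\Lambda_r$, a diffeomorphism fixing $0$, turns $L$ into an embedded $K$-invariant Legendrian disk for $(\R^5,\zeta)$ through $0$ with tangent $D_0$; applying $\Phi$ then gives an embedded disk $\mathcal{L}\subset\mathcal{M}$ which is Legendrian because $\Phi^*\alpha=\zeta$, is $J$-invariant because $\Phi_*K=J$, and has tangent $\Phi_*D_0=D$ at $p=\Phi(0)$ — this is precisely Proposition \ref{Prop:unoM} (and Proposition \ref{Prop:uno} is the special case $\mathcal{M}=\R^5$). The substantive difficulty is entirely inside Proposition \ref{Prop:uno}, namely solving the perturbed Laplace equation by a fixed point argument in adapted coordinates; at the present level the only point requiring care is the bookkeeping — checking that $K$-invariance, the anti-compatibility condition $d\alpha(Kv,v)=0$, and horizontality survive both $\Phi$ and $\Lambda_r$, and that the flattening dilation really does deliver the $C^{2,\nu}$-smallness needed to invoke Proposition \ref{Prop:uno}.
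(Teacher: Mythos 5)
Your proposal is correct and follows essentially the same route as the paper: the paper's own justification of Proposition \ref{Prop:unoM} is precisely the reduction preceding Proposition \ref{Prop:uno} (Darboux contactomorphism to the standard contact $\R^5$, then the dilation $\Lambda_r$ to achieve the $C^{2,\nu}$-smallness of $J-J_0$, with all data — horizontality, $J$-invariance, anti-compatibility — transported compatibly), combined with Proposition \ref{Prop:uno} itself. Your bookkeeping of how the contactomorphism and the dilation carry the Legendrian condition, the structure $J$, and the prescribed tangent plane matches the paper's discussion around (\ref{eq:dilatedcontact}).
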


\begin{proof}[\bf{proof of proposition \ref{Prop:uno}}]

\textit{Step 1}. Before going into the core of the proof, we need to perform a suitable change of coordinates.

The hyperplane $H_0$ coincides with $\R^4=\{t=0\}$. Up to performing an orthogonal rotation of coordinates in $H_0=\R^4$, we can assume that $D=\p_{x_1} \wedge J(\p_{x_1})$. 

Perform another orthogonal change of coordinates in $\R^4=\{t=0\}$ (the coordinate $t$ stays fixed) such that, in the new coordinates, that we still denote $(x_1, y_1, x_2, y_2)$, we have that the symplectic form $d \alpha$ still has the form $dx^1 d y^1 +  dx^2 d y^2$. This can be achieved by any rotation which sends the old quadruplet $\{\p_{x_1}, \p_{y_1}, \p_{x_2}, \p_{y_2}\}$ (based at $0$) to $\{\p_{x_1}, I (\p_{x_1}), W, I W\}$, where $I$ is the standard complex structure\footnote{See (\ref{eq:I}).} associated to $dx^1 d y^1 +  dx^2 d y^2$ and $W$ is a vector orthonormal to $\p_{x_1}$ and $J (\p_{x_1})$ for the metric induced by $dx^1 d y^1 +  dx^2 d y^2$ and $I$.

There is freedom on the choice of $W$; in step 2 we will determine it uniquely by imposing a further condition\footnote{This is needed in view of \textit{Step 4}.}. Before doing this we are going to make the notation less heavy.

This linear change of coordinates has not affected the fact that the hyperplanes $H_{\pi^{-1}(q)}$ are parallel\footnote{See remark \ref{oss:parall}.} in the standard coordinates of $\R^5$. This means that, if we take a vector $\p_{x_i}$ [resp. $\p_{y_i}$] in $\R^4$, with base-point $q \in \R^4$, its lift to a horizontal vector based at any point of the fiber $\pi^{-1}(q)$ has a coordinate expression of the form $\p_{x_i} + K^{x_i} \p_t$ [resp. $\p_{y_i}+ K^{y_i} \p_t$], where $K^{x_i}=K^{x_i}(x_1(q), x_2(q), y_1(q), y_2(q))$ and $K^{y_i}=K^{y_i}(x_1(q), x_2(q), y_1(q), y_2(q))$ are linear funtions of the coordinates of $q$ (they come from the last coordinate change). 

We are interested in the expression for $J$ in a neighbourhood of the origin. $J$ acts on the horizontal vectors $\p_{x_i} + K^{x_i} \p_t$, $\p_{y_i}+ K^{y_i} \p_t$. However, since the functions $K^{x_i}$, $K^{y_i}$ are independent of $t$, by abuse of notation we will forget about the $\p_t$-components of the horizontal lifts and speak of the action of $J$ on $\p_{x_i}, \p_{y_i}$, keeping in mind that the coefficients of the linear map $J$ are not constant along a fiber, i.e. $J$ cannot be projected onto $\R^4$.

With this in mind, recalling (\ref{eq:Jexpr}), the expression for $J$ in the unit ball $B^1(0) \subset \R^5$ is as follows: there are smooth functions $\sigma, \beta, \gamma, \delta$ depending on the five coordinates of the chosen point, such that\footnote{In (\ref{eq:J}) we are assuming that $\gamma \neq 0$. This is not restrictive. We can assume to be working in an open set where at least one of the functions $\beta$ and $\gamma$ is everywhere non-zero. If this is the case for $\beta$ and not for $\gamma$, a change of coordinates sending $\p_{x_2} \to \p_{y_2}$ and $\p_{y_2} \to -\p_{x_2}$ would lead us to (\ref{eq:J}) again.} 

\begin{equation}
\label{eq:J}
\left \{ \begin{array}{c} 
J(\p_{x_1})= \sigma \p_{x_1} + \beta\p_{x_2} +\gamma \p_{y_2}\\
J(\p_{x_2})= -\sigma \p_{x_2} + \delta \p_{x_1} - \gamma \p_{y_1} \\ 
J(\p_{y_1})= \sigma \p_{y_1} + \delta \p_{y_2}+ \frac{1+\sigma^2+ \beta \delta}{\gamma} \p_{x_2} \\ 
J(\p_{y_2})= -\sigma \p_{y_2} - \frac{1+\sigma^2+ \beta \delta}{\gamma} \p_{x_1} + \beta \p_{y_1} . \end{array} \right .
\end{equation}

\medskip

\textit{Step 2}. Denote the values of these coefficients at $0$ by $\delta(0)=\delta_0$, $\beta(0)=\beta_0$, $\sigma(0)=\sigma_0$, $\gamma(0)=\gamma_0$. We take now coordinates, that we underline to distinguish them from the old ones, determined by the transformation

\begin{equation}
\label{eq:chcoord}
\left \{ \begin{array}{c} 
\underline{\p_{x_1}}= \p_{x_1}\\
\underline{\p_{y_1}}= \p_{y_1} \\ 
\underline{\p_{x_2}}= \frac{\gamma_0}{\sqrt{\beta_0^2 + \gamma_0^2}} \p_{x_2} - \frac{\beta_0}{\sqrt{\beta_0^2 + \gamma_0^2}}\p_{y_2} \\ 
\underline{\p_{y_2}}= \frac{\beta_0}{\sqrt{\beta_0^2 + \gamma_0^2}} \p_{x_2} + \frac{\gamma_0}{\sqrt{\beta_0^2 + \gamma_0^2}}\p_{y_2} \\
\underline{\p_{t}}= \p_{t}. \end{array} \right .
\end{equation}

In the new coordinates, the endomorphism $J$ at $0$ acts on $\underline{\p_{x_1}}$ as 
$$ J_0(\underline{\p_{x_1}})=  \sigma_0 \underline{\p_{x_1}} + \sqrt{\beta_0^2 + \gamma_0^2}\underline{\p_{y_2}}$$
and the symplectic form $d \alpha$ still has the standard expression.

From now on we will write these new coordinates again as $(x_1, y_1, x_2, y_2, t)$, without underlining them. To summarize what we did in steps 1 and 2: we will now work in the unit ball of $\R^5$ with the symplectic form $ d \alpha= dx^1 d y^1 +  dx^2 d y^2$ on the horizontal distribution and an almost complex structure $J$ such that $\|J-J_0\|_{C^{2, \nu}}< \eps$ for some small $\eps$ that we will determine precisely later on, and such that $J$ is expressed by (\ref{eq:J}) with smooth functions $\sigma, \beta, \gamma, \delta$ depending on the five coordinates and satisfying $\beta_0=0$, $\gamma_0 >0$. The $J$-invariant plane $D$ is given by $D=\p_{x_1} \wedge J(\p_{x_1})$. 

The double coordinate change in steps 1 and 2 can be characterized as the unique change of coordinates such that $d \alpha=dx^1 d y^1 +  dx^2 d y^2$ and $D=\p_{x_1} \wedge J(\p_{x_1})= \gamma_0 (\p_{x_1} \wedge\p_{y_2})$ (for a positive $\gamma_0$).

\medskip

\textit{Step 3}. We are looking for an embedded Legendrian disk with tangent $D$ at the origin, therefore we will seek a Legendrian which is a graph over $D=\p_{x_1} \wedge \p_{y_2}$. Recall from $\cite{R}$ that the projection of any Legendrian immersion in $\R^5$ is a Lagrangian in $\R^4$ with respect to the symplectic form $d\alpha$. A Lagrangian graph over $D=\p_{x_1} \wedge \p_{y_2}$ must be of the form (see III.2 of \cite{HL}, in particular lemma 2.2)
\be
\left(x_1, \frac{\p f(x_1, y_2)}{\p x_1}, -\frac{\p f(x_1, y_2)}{\p y_2}, y_2 \right) \;\text{ for some } f:D^2_{x_1, y_2} \to \R.
\ee

The minus in the $x_2$-component is due to the fact that $I(\p_{y_2})=-\p_{x_2}$, while $I(\p_{x_1})=\p_{y_1}$. Our problem can be now restated as follows: find a function $f:D^2 \to \R$ such that the lift $\mathcal{L}$ with starting point $0$ of the Lagrangian disk $$L(x_1, y_2):=\left(x_1, \frac{\p f}{\p x_1}, -\frac{\p f}{\p y_2}, y_2 \right)$$ is $J$-invariant\footnote{By lift of $L$ with starting point $0$, we mean that the $t$-component of $\mathcal{L}(0,0)$ is $0$.

At this point we can see how, in the $5$-dimensional contact case, the necessity of lifting naturally leads to a second-order equation. In the $4$-dimensional almost-complex case, one does not need to worry about lifting.}.

The $J$-invariance condition is a constraint on the tangent planes: it is expressed by the following equation for the lift $\mathcal{L}$ of $L$:
\be
\label{eq:Jinvplane}
J\left(\frac{\p \mathcal{L}}{\p x_1}\right)=(1+\lambda) \frac{\p \mathcal{L}}{\p y_2} + \mu \frac{\p \mathcal{L}}{\p x_1}.
\ee

However, thanks to what we observed in step 1, the tangent vectors $\frac{\p \mathcal{L}}{\p x_1}$ and $\frac{\p \mathcal{L}}{\p y_2}$ to the lift $\mathcal{L}$ at any point have the first four components which equal $\frac{\p L}{\p x_1}$ and $\frac{\p L}{\p y_2}$ at the projection of the chosen point, independently of where we are lifting along the fiber; the fifth component of $\frac{\p \mathcal{L}}{\p x_1}$ and $\frac{\p \mathcal{L}}{\p y_2}$ is uniquely determined by the other four and by the point $(x_1, y_1, x_2, y_2)$ in $\R^4$. We will therefore consider equation (\ref{eq:Jinvplane}) only for $\frac{\p L}{\p x_1}$ and $\frac{\p L}{\p y_2}$. 

We denote the partial derivatives $\frac{\p f(x_1, y_2)}{\p x_1}$, $\frac{\p f(x_1, y_2)}{\p y_2}$, $\frac{\p^2 f(x_1, y_2)}{\p x_1^2}$, $\frac{\p^2 f(x_1, y_2)}{\p x_1 \p y_2}$ and $\frac{\p^2 f(x_1, y_2)}{\p y_2^2}$ respectively by $f_1$, $f_2$, $f_{11}$, $f_{1_2}$ and $f_{22}$. Then 
\be
\label{eq:lagrdisk}
L(x_1, y_2)=\left(\begin{array}{rrrr} 
x_1 \\
f_1 \\
-f_2 \\
y_2 \end{array} \right), \;\;
\frac{\p L}{\p x_1}=\left( \begin{array}{rrrr}
1\\ 
f_{11}\\ 
-f_{12}\\ 
0 \end{array} \right), \;\; \frac{\p L}{\p y_2}=\left( \begin{array}{rrrr}
0\\ 
f_{12}\\ 
-f_{22}\\ 
1 \end{array} \right).
\ee

It should be however born in mind that $J$ does depend on where we are lifting! After little manipulation, making use of (\ref{eq:J}), the equation in (\ref{eq:Jinvplane}) reads:

\begin{equation}
\label{eq:Jinvplane3}
 \left( \begin{array}{rrrr}
\sigma - \delta f_{12} - \mu\\ 
\sigma f_{11}- (1-\gamma) f_{12} - \lambda f_{12} - \mu f_{11}\\ 
\Delta f+\beta + \frac{1+\sigma^2 + \beta \delta -\gamma}{\gamma} f_{11} + \sigma f_{12} +\lambda f_{22} + \mu f_{12}\\ 
\gamma-1+\delta f_{11} -\lambda \end{array}\right)=\left( \begin{array}{rrrr}
0\\ 
0\\ 
0\\ 
0 \end{array} \right) ,
\end{equation}

with $\sigma, \beta, \gamma, \delta$ evaluated at the lift of $L=(x_1, f_1, -f_2, y_2 )$ in $\R^5$ with starting point $0$.

From the first and fourth line of (\ref{eq:Jinvplane3}) we get 
\be
\label{eq:mulambda}
\mu= -\delta f_{12}+\sigma, \;\; \lambda=\gamma-1+\delta f_{11}.
\ee

The second line of (\ref{eq:Jinvplane3}) can be checked to hold automatically true with these values of $\mu$ and $\lambda$. Then we need to find $f$ solving the third line of (\ref{eq:Jinvplane3}) with the $\mu$ and $\lambda$ given in (\ref{eq:mulambda}). We stress once again that (\ref{eq:Jinvplane3}) should be solved for $f$ with $\sigma, \beta, \gamma, \delta$ depending on the lift of $(x_1, f_1, -f_2, y_2 )$. Let us write the third line of (\ref{eq:Jinvplane3}) explicitly. It reads
\be
\label{eq:lapl}
\sum_{i,j=1}^2 M_{ij}  f_{ij} = \delta (f_{12}^2-f_{11}f_{22}) -\beta +\sum_{i,j=1}^2 A_{ij} f_{ij},
\ee
where $M$ and $A$ are the matrices 

\be
\label{eq:MA}
M=\left( \begin{array}{cc}
\frac{1+\sigma_0^2}{\gamma_0} & -\sigma_0\\

-\sigma_0 & \gamma_0 \end{array} \right),\;\; A= \left( \begin{array}{cc}
\frac{1+\sigma^2+\beta \delta}{\gamma}-\frac{1+\sigma_0^2}{\gamma_0} & \sigma-\sigma_0\\

\sigma-\sigma_0 & \gamma_0-\gamma \end{array} \right).
\ee
$M$ is a positive definite matrix and satisfies, for any vector $(\xi_1, \xi_2) \in \R^2$, the ellipticity condition 
$$\sum_{i,j=1}^2 M_{ij}\xi_i \xi_j  \geq k( \xi_1^2 + \xi_2^2)   \text{ for a positive }k.$$
 Remark also that, at the origin, $\beta(0)=0$ and $A$ is the zero matrix. The zero function $f=0$, describes the disk $D$. We want to solve equation (\ref{eq:lapl}) by a fixed point method in order to find a solution $f$ close to $0$. We will write $M f$ for the elliptic operator on the left hand side of (\ref{eq:lapl}).

Consider the functional $\mathcal{F}$ defined as follows: for $h\in C^{2,\nu}$ let $\mathcal{F}(h)$ be the solution of the following well-posed elliptic problem:

\be
\left\{ \begin{array}{rr} M [\mathcal{F}(h)]= \delta_h (h_{12}^2 - h_{11}h_{22}) - \beta_h + {A_{ij}}_h h_{ij}   \\
 \mathcal{F}(h)\left|_{\p D^2} \right. = 0   \end{array} \right.
\ee
where by $\delta_h$, $\beta_h$ and ${A_{ij}}_h$ we mean respectively the functions $\delta$, $\beta$ and $A_{ij}$ evaluated at the lift\footnote{As always, we are lifting the point $(0,h_1(0,0), -h_2(0,0),0)\in \R^4$ to the point $(0,h_1(0,0), -h_2(0,0),0,0)\in \R^5$. This determines the lift of $(x_1, h_1, -h_2, y_2)$ uniquely.} of $(x_1, h_1, -h_2, y_2)$ in $\R^5$ and considered as functions of $(x_1, y_2)$. A fixed point of $\mathcal{F}$ is a solution of (\ref{eq:lapl}).
We know from elliptic regularity that $\mathcal{F}(h)$ belongs to the space $C^{2,\nu}$ and Schauder estimates give
\be
\label{eq:schauder}
\|\mathcal{F}(h)\|_{C^{2,\nu}} \leq N \|\delta_h (h_{12}^2 - h_{11}h_{22}) - \beta_h + A_{ij} h_{ij} \|_{C^{0,\nu}}
\ee
for an universal constant $N$ (depending on $k$). To make the notation simpler in the following, we will assume $N>2$.

We are about to show the following claim: for $\|J-J_0\|_{C^{2,\nu}}$ small enough, the functional $\mathcal{F}$ is a contraction from the closed ball 

\be
\label{eq:littleball}
\left\{h \in C^{2,\nu}: \|h\|_{C^{2,\nu}} \leq \frac{1}{48 \max\{1,|\delta_0|\}N}\right\}
\ee

into itself. 

First of all, let us compute, for $h,g \in C^{2,\nu}$,

$$
M[\mathcal{F}(h)-\mathcal{F}(g)] = \delta_h (h_{12}^2 -g_{12}^2+ g_{11}g_{22}- h_{11}h_{22}) +
$$
$$
(g_{12}^2- g_{11}g_{22})(\delta_h-\delta_g)+ (\beta_g- \beta_h) + {A_{ij}}_h h_{ij} -{A_{ij}}_g g_{ij}=
$$
\be
\label{eq:diff}
=\delta_h[(h_{12}+g_{12})(h_{12}-g_{12}) + g_{11}(g_{22}-h_{22})+ h_{22}(g_{11}-h_{11})]+
\ee
$$
+(g_{12}^2- g_{11}g_{22})(\delta_h-\delta_g)+ (\beta_g- \beta_h) + {A_{ij}}_h (h_{ij}-g_{ij})+({A_{ij}}_h -{A_{ij}}_g) g_{ij}.
$$ 
Remark that we have bounds of the form
\be
\left\{ \begin{array}{rr} \|\delta_h\|_{C^1} \leq \|\delta\|_{C^0} +2 \| \nabla \delta\|_{C^0} \|h\|_{C^2}, \\
\|\delta_h-\delta_g\|_{C^1} \leq 2( \|\delta\|_{C^2} \|h\|_{C^2} + 2 \| \nabla \delta\|_{C^0}) \|h-g\|_{C^2}, \end{array} \right.
\ee

where the norms are taken in the unit ball $B^5_1(0)$. Similar bounds hold true for $\beta$ and $A_{ij}$.

For $\|J-J_0\|_{C^{2,\nu}}$ small enough, in particular if $\|\delta\|_{C^2} \leq 2|\delta_0|$, Schauder theory applied to equation (\ref{eq:diff}) with boundary data $(\mathcal{F}(h)-\mathcal{F}(g))\left|_{\p D^2}\right.  = 0$ gives 
\be
\label{eq:contr}
\|\mathcal{F}(h)-\mathcal{F}(g)\|_{C^{2,\nu}} \leq$$ 
$$  N\left( 4|\delta_0|(\|h\|_{C^{2,\nu}}+\|g\|_{C^{2,\nu}} ) + 4\|g\|_{C^{2,\nu}}^2 + 4\|\beta\|_{C^2} + 6\|A\|_{C^2}\right) \|h-g\|_{C^{2,\nu}} .
\ee

Let us now estimate, again by (\ref{eq:schauder})
\be
\label{eq:into}
\|\mathcal{F}(h)\|_{C^{2,\nu}} \leq 4 N|\delta_0| \|h\|_{C^{2,\nu}}^2 +2N \|A\|_{C^1} \|h\|_{C^{2,\nu}} +\|\beta\|_{C^1}.
\ee

If $\|\beta\|_{C^2} +\|A\|_{C^2} \leq \frac{1}{24 \max\{1,|\delta_0|\} N^2}$, which surely holds for $\|J-J_0\|_{C^{2,\nu}}$ small enough, by (\ref{eq:contr}) and (\ref{eq:into}) we get that $\mathcal{F}$ is a contraction of the forementioned ball (\ref{eq:littleball}). By Banach-Caccioppoli's theorem, there exists a unique fixed point $f$ of $\mathcal{F}$, so we get a solution to equation (\ref{eq:lapl}) of small $C^{2,\nu}$-norm. 

More precisely, from (\ref{eq:into}) we get
\be
\label{eq:boundJ}
\|f\|_{C^{2,\nu}}\leq K(\eps)
\ee

where $K(\eps)$ is a constant which goes to zero as $\eps \to 0$.

The lift of $(x_1, f_1, -f_2, y_2)$ is an embedded, $J$-invariant, Legendrian disk that we denote $\mathcal{L}_{0,D}$. This disk, however, does not necessarily pass through the origin. 

\medskip

\textit{Step 4}. In step 3 we constructed a $J$-invariant disk which is a small $C^{2,\nu}$-perturbation of $D$ but that might not pass through $0$.

We need to generalize the construction performed in step 3. Let us set up notations: we are working in the unit ball of $\R^5$, with coordinates $(x_1,y_1,x_2, y_2,t)$, such that the point $p$ in the statement of Proposition \ref{Prop:uno} is the origin $0$ and $D$ is the plane $\p_{x_1}\wedge J_P(\p_{x_1})$ at the origin. The almost complex structure $J$ satisfies $\|J-J_0\|_{C^{2,\nu}}< \eps$ for some positive $\eps$ as small as we want. An upper bound for $\eps$ was described in step 3.

Denote by $Z_r:=\{(0,y_1,x_2, 0,t): x_2^2+y_1^2 \leq r^2, \; |t|\leq r \}$. For any point $P \in B_1(0) \subset \R^5$, the set of $J$-invariant planes at $P$ can be parametrized by $\CP^1$: we will use the following identification between $H_P$ and $\C^2$

\be
\label{eq:ident}
\p_{x_1}= (0,1), \; J_P(\p_{x_1})= (0,i), \; \p_{y_1}= (1,0), \; J_P(\p_{y_1})= (i,0).
\ee

Passing to the quotient, we get a pointwise identification $\eta_P$ between $\{\Pi: \Pi \text{ is a $J$-invariant 2-plane in } H_P\}$ and $\CP^1$. In this identification, for any point $P$ the planes $\p_{x_1}\wedge J_P(\p_{x_1})$ are represented by $[0,1] \in \CP^1$.

We denote by $\mathcal{U}^P_r$ the set of $J$-invariant planes at $P$ which are identified via $\eta_P$ with $\mathcal{U}_r:=\{[W_1, W_2] \in \CP^1: |W_1|\leq r |W_2|\}$. This allows us to regard the set

$$\{(P,X) \text{ with } P\in Z_r \text{ and } X \in \mathcal{U}^P_r\}$$
as the product manifold 
$$Z_r \times \mathcal{U}_r.$$

For any couple $(P,X) \in Z_1 \times \mathcal{U}_1$, we can set coordinates adapted to $(P,X)$ as follows: after a translation sending $0$ to $P$, we can rotate the coordinate axis by choosing $v_X$, the orthogonal projection of $\p_{x_1}$ onto the closed unit ball in $X$ and setting the new $\p_{x_1}$ to be $\frac{v_X}{|v_X|}$. With this choice, we can perform the same change of coordinate\footnote{In the sequel we will denote by $\E_{P,X}$ the affine map which induces this change of coordinates.} that we had in steps 1,2 and 3.

Now, using a fixed point argument as in step 3, we can associate to any couple $(P,X) \in Z_1 \times \mathcal{U}_1$ a $J$-invariant disk that we denote by $\mathcal{L}_{P,X}$. 

The estimate given by (\ref{eq:boundJ}) implies that $|T\mathcal{L}_{P,X}-X| \leq K(\eps)$,  so in particular we have $T \mathcal{L}_{P,X} \in \mathcal{U}_{1+K(\eps)}$.

Hence $\mathcal{L}_{P,X}$ is transversal to the 3-dimensional plane $\{(0,y_1,x_2, 0,t)\}$. Consider the point $Q:=\mathcal{L}_{P,X} \cap \{(0,y_1,x_2, 0,t)\}$ and the tangent plane to $\mathcal{L}_{P,X}$ at $Q$. We get a map 
$$\Psi: Z_1 \times \mathcal{U}_1 \to Z_{1+K(\eps)} \times \mathcal{U}_{1+K(\eps)}$$
$$\Psi(P,X) = (Q, T_Q \mathcal{L}_{P,X}).$$

Condition (\ref{eq:boundJ}) tells us that 

\be
\label{eq:perturbpsi}
\|\Psi-Id\|_{C^{2,\nu}} \leq K(\eps)
\ee

where $K(\eps)\to 0$ as $\eps \to 0$. Therefore $\Psi$ is invertible on an open set $\mathcal{U}_r$, and $\mathcal{U}_r$ is $\eps$-close to $\mathcal{U}_1$. So, for $\|J-J_0\|_{C^{2,\nu}}=\eps$ small enough, by inverting $\Psi$ we get that for every point $Q$ in $Z_r$ and any $J$-invariant disk $Y$ through $Q$ lying in $\mathcal{U}^q_r$, we can find a couple $(P,X) \in Z_1 \times \mathcal{U}_1$ such that $\mathcal{L}_{P,X}$ goes through $Q$ with tangent $Y$.

In particular we can find an embedded, $J$-invariant Legendrian disk which goes through $0$ with tangent $\p_{x_1} \wedge J\p_{x_1}$.

Remark that, due to the smoothness of $J$, the same proof performed using the space $C^{m, \nu}$ for any $m \geq 2$ rather than $C^{2, \nu}$ gives that the disks $\mathcal{L}_{P,X}$ are $C^{\infty}$-smooth.

We have thus proved Proposition \ref{Prop:uno}. 

\end{proof}

Remark again that we have actually shown more: in the coordinates described in (\ref{eq:ident}), for each couple $(p, X)$, $p \in Z_1$, $X \in \mathcal{U}_1 \subset \CP^1$ we can find \footnote{The above proof actually yielded the result for an open set $\mathcal{U}_r$ with $r$ close to $1$, but of course we can assume that it holds for $r=1$.} an embedded, $J$-invariant Legendrian disk which goes through $p$ with tangent $X$.

This will be useful for the next results.

\medskip
\textbf{Dependence on the choice of coordinates}.
In the previous proof we constructed, from each couple $(p, X)$, $p \in Z_1$, $X \in \mathcal{U}_1 \subset \CP^1$, a disk $\mathcal{L}_{p,X}$ whose projection $L_{p,X}$ in $\R^4$ is described, in suitable coordinates for which $X= \p_{x_1} \wedge \p_{y_2}$, as a graph $(x_1, f_1, -f_2, y_2)$. To make notations adapted to what we want to develop in this section, we will write $f^{p,X}$ instead of $f$ for the function whose gradient describes the graph. 

Given $(p,X)$, in \textit{step 4} we chose uniquely the change of coordinates to perform in order to write the equations that lead to the solution $f^{p,X}$ of (\ref{eq:lapl}). We denote the affine map that induces the change of coordinates by $\E_{p,X}$. The function $f^{p,X}(x_1, y_2)$ solves equation (\ref{eq:lapl}) with coefficients $\delta, \beta, \sigma, \gamma$ depending on $\E_{p,X}$, therefore we will now write it as

\be
\label{eq:lapl2}
M^{p,X}_{ij} f^{p,X}_{ij} = \delta^{p,X} \left((f^{p,X})_{12}^2-(f^{p,X})_{11}(f^{p,X})_{22}\right) -\beta^{p,X} +\sum_{i,j=1}^2 A^{p,X}_{ij} (f^{p,X})_{ij},
\ee

where $M^{p,X}$ and $A^{p,X}$ are as in (\ref{eq:MA}) but we explicited the $(p,X)$-depen\-dence. All the functions in (\ref{eq:lapl2}) are functions of $(x_1, y_2)$, but we want to see how the solution $f^{p,X}(x_1, y_2)$ changes with $(p,X)$. In this section we will denote by $\nabla_X$ and $\nabla_p$ the gradients with respect to the variables $X \in \mathcal{U}_1$ and $p \in Z_1$. The $x_1$ and $y_2$ derivatives will still be denoted by pedices $i,j \in \{1,2\}$.

\begin{lem}
As $X \in \mathcal{U}_1$ and $p\in Z_1 \subset \R^5$, the solutions $f^{p,X}$ of the corresponding equations (\ref{eq:lapl2}) satisfy
\be
\label{eq:estfpX}
\text{for } \; s, l \in \{0,1,2\}, \;\;  \| \nabla^s_p \nabla^l_X f^{p,X}\|_{C^{2,\nu}} \leq K (\eps).
\ee
where $K (\eps)$ is a constant which goes to $0$ as $\eps \to 0$ (so we can make $K (\eps)$ as small as we want be dilating enough).
\end{lem}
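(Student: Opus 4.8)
The statement is a quantitative smooth-dependence result for the fixed point $f^{p,X}$ constructed in Proposition \ref{Prop:uno}, so the natural approach is to differentiate the fixed-point equation (\ref{eq:lapl2}) with respect to the parameters $(p,X)$ and bootstrap the Schauder/contraction machinery already in place. First I would record that the dependence of the coefficients $\delta^{p,X},\beta^{p,X},\sigma^{p,X},\gamma^{p,X},A^{p,X},M^{p,X}$ on $(p,X)$ is smooth and, crucially, \emph{controlled}: they are obtained by composing the smooth almost complex structure $J$ (and the quantities $\sigma,\beta,\gamma,\delta$ of (\ref{eq:J})) with the affine change of coordinates $\E_{p,X}$, which depends smoothly on $(p,X)$; moreover the base point of the lift, $(0,h_1(0,0),-h_2(0,0),0,0)$, depends smoothly on $f^{p,X}$ and hence on $(p,X)$. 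Since $\|J-J_0\|_{C^{2,\nu}}=\eps$ and $\E_{p,X}$ ranges over a compact family, all $C^{2,\nu}$-norms of $\nabla_p^s\nabla_X^l$ of these coefficient functions are $\le C\eps$ for $(s,l)\neq(0,0)$ and bounded by a universal constant plus $C\eps$ for $(s,l)=(0,0)$; this is the input that makes everything work.

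**Key steps, in order.** (1) The case $s=l=0$ is already (\ref{eq:boundJ}): $\|f^{p,X}\|_{C^{2,\nu}}\le K(\eps)$, uniformly in $(p,X)$, because the ball (\ref{eq:littleball}) and the contraction constant in (\ref{eq:contr})--(\ref{eq:into}) do not depend on $(p,X)$. (2) For the first derivatives, differentiate (\ref{eq:lapl2}) formally in a parameter direction $\partial_\theta\in\{\nabla_p,\nabla_X\}$. Writing $g:=\partial_\theta f^{p,X}$, one gets a \emph{linear} second-order equation of the form $M^{p,X}_{ij}g_{ij} = \sum_{i,j}\big(A^{p,X}_{ij}-2\delta^{p,X}(\text{Hess }f)^{\sharp}_{ij}\big)g_{ij} + R_\theta$, where $R_\theta$ collects the terms in which $\partial_\theta$ hits the coefficients $\delta^{p,X},\beta^{p,X},A^{p,X},M^{p,X}$ (and, through the lift's base point, $f$ itself evaluated at $(0,0)$). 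The operator $M^{p,X}$ is uniformly elliptic with the same constant $k$, and the zeroth-order-free first-order perturbation on the right has $C^{0,\nu}$-coefficient norm $\le C(K(\eps)+\eps)<\tfrac{1}{2N}$ once $\eps$ is small, so the linear problem with zero boundary data is uniquely solvable with a Schauder bound $\|g\|_{C^{2,\nu}}\le 2N\|R_\theta\|_{C^{0,\nu}}$. Since $\|R_\theta\|_{C^{0,\nu}}\le C\eps\,(1+\|f^{p,X}\|_{C^{2,\nu}}+\|f^{p,X}\|_{C^{2,\nu}}^2)\le K(\eps)$, we obtain $\|\nabla_p f^{p,X}\|_{C^{2,\nu}},\|\nabla_X f^{p,X}\|_{C^{2,\nu}}\le K(\eps)$. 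Of course one first justifies that $f^{p,X}$ genuinely is differentiable in $(p,X)$: this is the standard parameter-dependent implicit/contraction argument — the map $(p,X,h)\mapsto \mathcal{F}_{p,X}(h)$ is $C^1$ jointly, the fiberwise derivative $I-D_h\mathcal{F}_{p,X}$ is invertible on the ball (\ref{eq:littleball}) by the contraction property, so the fixed point is $C^1$ in $(p,X)$ by the implicit function theorem in Banach spaces; iterating gives $C^2$. (3) For the mixed/second parameter derivatives $s+l=2$ one differentiates the linear equation for $g$ once more; the new right-hand side again involves only $(p,X)$-derivatives of coefficients (all $\le C\eps$) together with the already-controlled quantities $f^{p,X}$ and $\nabla_{p,X}f^{p,X}$ (all $\le K(\eps)$) paired bilinearly, so $\|R\|_{C^{0,\nu}}\le K(\eps)$ and the same uniformly elliptic Schauder bound closes the estimate. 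One runs the argument up to order two exactly as the original proof ran the $C^{m,\nu}$ argument up to arbitrary $m$.

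**Main obstacle.** The only real point of care — and where I would spend most of the writing — is the parameter dependence through the \emph{lift}: the coefficients $\delta^{p,X},\beta^{p,X},A^{p,X}$ are the functions $\delta,\beta,A$ of (\ref{eq:J}) evaluated along the lifted graph, whose base point $(0,h_1(0,0),-h_2(0,0),0,0)$ itself depends on the unknown, and on $(p,X)$ both directly (via $\E_{p,X}$) and indirectly (via $f^{p,X}$). So differentiating in $(p,X)$ produces extra terms through the chain rule applied to this lift, and one must check these are controlled in $C^{0,\nu}$ by the already-established bounds on $\nabla^{\le 1}_{p,X}f^{p,X}$ and by $\eps$. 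This is exactly the kind of nuisance the authors flag with their footnotes about "$J$ depends on where we are lifting" and "this is needed in view of Step 4"; once one is disciplined about it, no new idea beyond uniform ellipticity plus a linearized contraction/Schauder loop is required, and the constants are manifestly of the form $K(\eps)\to 0$ because every newly appearing term carries at least one factor of $\eps$ (from $\|J-J_0\|_{C^{2,\nu}}$, noting $\beta(0)=0$ and $A(0)=0$) or of $K(\eps)$ (from the already-estimated lower-order data).
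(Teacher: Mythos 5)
Your approach is the same as the paper's: differentiate the fixed-point equation (\ref{eq:lapl2}) in the parameters $(p,X)$, observe that the resulting equation for $\nabla_{p}f^{p,X}$, $\nabla_Xf^{p,X}$ is linear and uniformly elliptic with the same operator $M^{p,X}$, and close the estimate with Schauder theory, iterating up to order two. The structure and conclusion are correct.

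One intermediate claim is overstated, though, and it is worth being precise about it because it is exactly where the paper puts its emphasis. You assert that \emph{all} $C^{2,\nu}$-norms of $\nabla_p^s\nabla_X^l$ of the coefficients $\delta^{p,X},\sigma^{p,X},\gamma^{p,X},M^{p,X},\ldots$ are $\le C\eps$ for $(s,l)\neq(0,0)$. This is false in general: the parameter dependence enters through the affine frame change $\E_{p,X}$, whose derivative in $(p,X)$ is of order one, and since $\delta_0,\sigma_0,\gamma_0$ are not assumed small, quantities such as $\nabla_X\delta^{p,X}$ and $\nabla_XM^{p,X}$ are only bounded by a uniform constant $K$ depending on $\|J\|_{C^{2,\nu}}$ and $\|\E\|_{C^{2,\nu}}$ --- not by $K(\eps)$. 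The paper states exactly this weaker bound. The estimate nevertheless closes because every such $O(1)$ coefficient-derivative appears multiplied by $f^{p,X}$ or its spatial derivatives, which are already $O(K(\eps))$ by (\ref{eq:boundJ}); the single term that is \emph{not} paired with anything small is $\nabla_p^s\nabla_X^l\beta^{p,X}$, and its smallness is not a consequence of $\|J-J_0\|_{C^{2,\nu}}=\eps$ alone but of the normalization $\beta^{p,X}(0,0)=0$ for \emph{every} $(p,X)$ (built into the choice of $\E_{p,X}$), which forces $\nabla_p^s\nabla_X^l\beta^{p,X}$ to vanish at the origin of $D^2$, combined with the $O(\eps)$ bound on its spatial gradient. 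You do cite $\beta(0)=0$ in passing, so the ingredient is present in your write-up, but you should restructure the coefficient bounds so that the argument does not rest on the incorrect blanket $O(\eps)$ claim.
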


\begin{proof}
Differentiating (\ref{eq:lapl2}) w.r.t. $X$

$$
M^{p,X}_{ij} (\nabla_X f^{p,X})_{ij} = (\nabla_X \delta^{p,X}) \left((f^{p,X})_{12}^2-(f^{p,X})_{11}(f^{p,X})_{22}\right) +  $$ $$+\delta^{p,X} 
\left(2(f^{p,X})_{12}(\nabla_X f^{p,X})_{12}-(f^{p,X})_{11}(\nabla_X f^{p,X})_{22}-(f^{p,X})_{22}(\nabla_X f^{p,X})_{11} \right) -$$
$$ - \nabla_X \beta^{p,X} + (\nabla_X A^{p,X})_{ij} (f^{p,X})_{ij} + A^{p,X}_{ij} (\nabla_X f^{p,X})_{ij} - (\nabla_X M^{p,X})_{ij} (f^{p,X})_{ij}.$$

The quantities $\nabla_X \delta^{p,X}$, $\nabla_X M^{p,X}$, etc, are all bounded in $C^{2,\nu}$-norm by some constant $K$ (uniform in $p$ and $X$) which depends on $\|J\|_{C^{2,\nu}}$ and $\|\E\|_{C^{2,\nu}}$. 

Recalling that $\|f^{p,X}\|_{C^{2,\nu}} \leq K(\eps)$, by elliptic theory we get that $\nabla_X f^{p,X}$ satisfies

$$\| \nabla_X f^{p,X}\|_{C^{2,\nu}} \leq K (\eps) + \| \nabla_X \beta^{p,X}\|_{C^{0,\nu}}.$$

$E_{p,X}$ was chosen so that the function $\beta^{p,X}(x_1, y_2)$ satisfies $\beta^{p,X}(0,0)=0$ for all $(p,X)$. Therefore $$\text{for $s,l \in \{0,1,2\}$ } \; \nabla^s_p \nabla^l_X \beta^{p,X} =0 \text{ when evaluated at } (x_1, y_2)=(0,0).$$

Then it is not difficult to see that 

$$\text{for } \; s,l \in \{0,1,2\}, \;\; \| \nabla^s_p \nabla^l_X \beta^{p,X}\|_{C^{1}} \leq K(\eps)  \text{ for } (x_1, y_2)\in D^2.$$

Therefore 
$$\| \nabla_X f^{p,X}\|_{C^{2,\nu}} \leq K (\eps).$$

In the same way we can get estimates of the form 
$$
\text{for } \; s,l \in \{0,1,2\}, \;\; \| \nabla^s_p \nabla^l_X f^{p,X}\|_{C^{2,\nu}} \leq K (\eps).
$$

\end{proof}

\medskip

\textbf{Legendrians as graphs on the same disk}. For each couple $(p,X) \in Z_1 \times \mathcal{U}_1$, we have that the embedded disk $\mathcal{L}_{\Psi^{-1}(p,X)}$ passes through $p$ with tangent $X$. 

So far, each $f^{p,X}$ was produced in the system of coordinates induced by $\E_{p,X}$, so $L_{p,X}$ was seen as a graph on $X$. However, thanks to (\ref{eq:boundJ}), $L_{p,X}$ is also a $C^{2, \nu}$-graph over $[0,1]$ for any $X\in \mathcal{U}_1$. We will now look at all $X\in \mathcal{U}_1$ and at all $L_{p,X}$ as graphs on $[0,1]$. In particular we will concentrate on the planes $X$ through points $(0, t)\in \R^4 \times \R$ and on $\mathcal{L}_{\Psi^{-1}((0,t),X)}$, the $J$-invariant Legendrian which goes through $(0,t)$ with tangent $X$.

Any $X\in \mathcal{U}_1^{(0,t)}$, which is a $J$-invariant 2-plane through $(0,t)$, is described as the graph over $\p_{x_1} \wedge \p_{y_2}\cong[0,1]$ of an affine $\R^2$-valued function
$$ H^X :(x_1, y_2) \rightarrow  (h^X_1, -h^X_2).$$
If $t=0$, we can use complex notation, identifying $H_0=\R^4$ with $\C^2$ as in (\ref{eq:ident}), so
\be
\label{eq:complexcoords}
\p_{x_1}= (0,1), \; J_0(\p_{x_1})= (0,i), \; \p_{y_1}= (1,0), \; J_0(\p_{y_1})= (i,0).
\ee
Then, if $X=[W_1, W_2]$, we have that $H^X$ can be expressed as
$$ H^X :z \rightarrow  \zeta=\frac{W_1}{W_2} z. $$

Otherwise, if $t \neq 0$, $H^X$ is just an affine function since $J_{(0,t)} \neq J_0$ in general.

\medskip

What about $L_{\Psi^{-1}((0,t),X)}$, the projection of $\mathcal{L}_{\Psi^{-1}((0,t),X)}$ onto $\R^4$? It was described as the graph of the function $f^{\Psi^{-1}((0,t),X)}$ over the unit disk in the 2-plane given by the second component of $\Psi^{-1}((0,t),X)$. 

Of course, if we want to write it as a graph on $\p_{x_1} \wedge \p_{y_2}$, we will only be able to do so on a restricted disk, for example $\{(x_1, y_2): |x_1^2 + y_2 ^2| \leq \frac{1}{2}\}$. To simplify the exposition, however, we will assume that $f^{\Psi^{-1}((0,t),X)}$ was defined on a larger disk $D_X$ inside $X$ so that, for any $X\in \mathcal{U}_1$, $L_{\Psi^{-1}((0,t),X)}$ can be written as a graph on the unit disk $\{(x_1, y_2): |x_1^2 + y_2 ^2| \leq 1\}$ in the $\p_{x_1} \wedge \p_{y_2}$-plane.

We will denote by $D_0$ the 2-dimensional unit disk, and we will identify it with $\{(x_1, y_2): |x_1^2 + y_2 ^2| \leq 1\}$ in the $\p_{x_1} \wedge \p_{y_2}$-plane.

\medskip

It is not difficult to see that, for each choice of $t$ and $X$, there are a diffeomorphism $d$ from $D_0$ to the enlarged disk $D_X$ and an affine transformation $\T$ of $\R^2$ depending on $X$, $t$ and $\Psi^{-1}((0,t),X)$ such that, over $D_0$, $L_{\Psi^{-1}((0,t),X)}$ is the graph of a function of the form
\be
\label{eq:defF}
H^{t,X} + F^{t,X}: D_0 \rightarrow \R^2, \text{ with } F^{t,X}:= \T \circ f^{\Psi^{-1}((0,t),X)} \circ d.
\ee 

Both $d$ and $\T$, due to the estimate (\ref{eq:perturbpsi}), have bounded derivatives 
\be
\label{eq:estdT}
n,s,l \in \{0,1,2\}, \;\; \|\nabla_z^n \nabla_p^s \nabla_X^l \T\|_{L^\infty} + \|\nabla_z^n \nabla_p^s \nabla_X^l d\|_{L^\infty} \leq K
\ee
uniformly in $X \in \mathcal{U}_1$, $p \in Z_1$ and $z \in D_0$.

For $X\in \mathcal{U}_1$, from the definition (\ref{eq:defF}), using (\ref{eq:estdT}), (\ref{eq:estfpX}) and (\ref{eq:perturbpsi}), we get, for $n,s,l \in \{0,1,2\}$,

\be
\label{eq:estF}
\| \nabla_z^n \nabla_p^s \nabla_X^l F^{t,X}\|_{L^\infty} \leq K \| \nabla_z^n \nabla_p^s \nabla_X^l f^{\Psi^{-1}((0,t),X)}\|_{L^\infty}  \leq K (\eps),
\ee
with $K(\eps) \to 0$ as $\eps \to 0$.

\medskip

\textbf{Construction of the 3-dimensional surfaces: polar foliation}. Using coordinates as in (\ref{eq:complexcoords}), so that the hyperplane $H_0$ is identified with $\C^2$, we expressed each $L_{\Psi^{-1}((0,t),X)}$ as the graph of the following function
$$H^{t,X} + F^{t,X}: D_0 \rightarrow \R^2=\C,$$
which is a perturbation of the affine function $H^{t,X}$ representing the projection on $\R^4$ of the disk $X$ through $(0,t)$.

For the construction that we are about to make, we need to fix a smooth determination of vectors $V_X \in X$ for $X \in \mathcal{U}_1$. There are many ways to do so, we will do it as follows. In our coordinates $\p_{x_1} \in [0,1]$. Then, in the unit disk centered at $0$ inside $X$, chose the vector $v_X$ that minimizes\footnote{There is no geometric meaning in this particular choice, we are just suggesting a smooth determination of vectors, any choice would work the same.} the distance to $\p_{x_1}$ and take $V_X=\frac{v_X}{|v_X|}$.

For any $t \in (-1, 1)$, and for each $X \in \mathcal{U}_1$, at the point $(0, t) \in \R^5$ (here $0 \in \R^4$), take the 2-plane given by $X^t:=V_X \wedge J_{(0,t)}(V_X)$. In this notation, $X=X^0$. For each $X$ and $t$, consider the Legendrian $\mathcal{L}_{\Psi^{-1}((0,t), X^t)}$ going through the point $(0,t)$ with tangent $X^t$: we will now denote it by $\tilde{\mathcal{L}}_{t,X^t}$. As $t \in (-1, 1)$, the union 
\be
\label{eq:sigma}
\Sigma_0^X:= \cup_{t \in (-1, 1)} \tilde{\mathcal{L}}_{t,X^t}
\ee
gives rise to a 3-dimensional smooth surface, as can be seen by writing the parametrizaton of $\Sigma_0^X$ on $D_0 \times (-1,1)$ and using (\ref{eq:estF}) \footnote{Actually, from (\ref{eq:estF}) we get that $\Sigma_0^X$ is $C^2$-smooth. However, (\ref{eq:estfpX}) and (\ref{eq:estF}) can be proved in the same way for higher-order derivatives, so we can get that $\Sigma_0^X$ are as smooth as we want.}.

Each $\tilde{\mathcal{L}}_{t,X^t}$ has a projection $\tilde{L}_{0,X^t}$ onto $\R^4$ which has a representation as the graph on $D_0$ of the function
$$H^{X^t} + F^{X^t}: D_0 \rightarrow \R^2=\C.$$
From $\tilde{L}_{0,X^t}$, the surface $\tilde{\mathcal{L}}_{t,X^t}$ is uniquely recovered by lifting with starting point $(0,t)$.

\medskip

Now with a little more effort we can show:

\begin{Prop}
\label{Prop:polar}
For $X\in \mathcal{U}_1$, the 3-surfaces $\Sigma_0^X$ foliate the set $\{(\zeta, z, t): |\zeta|\leq |z| \leq 1, |t| \leq \frac{1}{2}\} \subset \C \times \C \times \R=\R^5$. 
\end{Prop}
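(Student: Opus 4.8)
The plan is to show that the map sending a point in parameter space to the point it lands on in $\R^5$ is a diffeomorphism onto the claimed region, using that the whole construction is a small perturbation of the flat model where $J\equiv J_0$.

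First I would fix the parametrization. For each $X \in \mathcal{U}_1$ and each $t \in (-1,1)$, the leaf $\tilde{\mathcal{L}}_{t,X^t}$ is the lift, with starting point $(0,t)$, of the graph over $D_0$ of $H^{X^t} + F^{X^t}$. Writing a point of $D_0$ as $z$, this gives an explicit map
\be
\Theta: \mathcal{U}_1 \times D_0 \times (-1,1) \to \R^5, \qquad \Theta(X,z,t) = \big(H^{X^t}(z) + F^{X^t}(z),\; z,\; \tau(X,z,t)\big),
\ee
where the $\C^2$-component is $(\zeta\text{-slot}, z\text{-slot})$ in the coordinates of \eqref{eq:complexcoords} and $\tau$ is the $t$-component obtained by integrating the Legendrian lifting condition starting from height $t$ at $z=0$. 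By \eqref{eq:estF} (and its higher-order analogues), $\Theta$ differs from the model map $\Theta_0(X,z,t) = (H^X_0(z), z, t)$ — where $H^X_0(z) = \frac{W_1}{W_2}z$ for $X=[W_1,W_2]$, and the $t$-component is literally $t$ because in the flat case the lift of a graph through $(0,t)$ stays at constant height — by a quantity that is $K(\eps)$-small in $C^2$, uniformly.

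Next I would analyze the model map $\Theta_0$. In the flat case $X = X^0 = X^t$ for all $t$ (since $J_{(0,t)} = J_0$), and $\Theta_0(X,z,t) = (\lambda_X z,\, z,\, t)$ with $\lambda_X = W_1/W_2 \in \overline{D_0}$ ranging over the closed unit disk as $X$ ranges over $\mathcal{U}_1$ (using $|W_1| \le |W_2|$). So $\Theta_0$ maps $\mathcal{U}_1 \times D_0 \times (-1,1)$ bijectively onto $\{(\zeta,z,t): |\zeta| \le |z| \le 1,\, |t|<1\}$: given $(\zeta,z,t)$ in that set with $z \ne 0$, one recovers $t$ directly, $z$ directly, and $\lambda_X = \zeta/z$, hence $X$; the locus $z=0$ is a degenerate fiber (all leaves through $(0,t)$ pass through it) but this is exactly the expected behavior of a polar-type foliation and does not obstruct the foliation property on the region with $|z|\le 1$. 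The leaves $\{X = \text{const}\}$ are the model $\Sigma_0^X$'s and they clearly foliate. I would then invoke a quantitative inverse/implicit function theorem argument: since $\Theta - \Theta_0$ is $C^1$-small, $\Theta$ is still an injective immersion on the relevant compact region, its image still contains $\{|\zeta|\le|z|\le 1,\, |t|\le \tfrac12\}$ (the shrinkage from $(-1,1)$ to $[-\tfrac12,\tfrac12]$ and from the open to a slightly smaller closed region absorbs the $K(\eps)$ perturbation), and distinct values of $X$ give disjoint leaves — so the $\Sigma_0^X$ foliate the stated set. Transversality of leaves to the slices $\{t=\text{const}\}$ and the graph structure over $D_0$ both follow from the same smallness, echoing the transversality already noted after \eqref{eq:perturbpsi}.

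The main obstacle I anticipate is not any single estimate — all the needed smallness is already packaged in \eqref{eq:estF} and \eqref{eq:perturbpsi} — but rather bookkeeping the $t$-dependence carefully: unlike the flat model, $X^t = V_X \wedge J_{(0,t)}(V_X)$ genuinely varies with $t$, so one must check that the assignment $t \mapsto \tilde{\mathcal{L}}_{t,X^t}$ still sweeps out a surface transverse to the $t=$const slices and that, as $X$ varies, these surfaces remain pairwise disjoint and collectively exhaust the target region. Concretely, the point is that $\partial_t \tau$ is close to $1$ and the $X$-derivative of the $\zeta$-component is close to $z\,\partial_X \lambda_X$, which is an isomorphism on $\mathcal{U}_1$ away from $z=0$; making this uniform down to the degenerate locus $z=0$ (where the map is not injective in $X$) requires phrasing the foliation claim in the right coordinates — polar coordinates in the $(\zeta,z)$-plane, as the name ``polar foliation'' suggests — so that the leaves are graphs of the angular/radial data and the $z=0$ axis is simply a common edge, not a point where the foliation breaks down.
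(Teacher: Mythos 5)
Your proposal is correct and matches the paper's argument in substance: both treat the curved construction as a $C^1$-small perturbation (quantified by (\ref{eq:estF})) of the flat model $J\equiv J_0$ and conclude by a perturbation-of-the-identity / inverse function argument. The only real difference is bookkeeping: instead of inverting the full parametrization $\Theta$, the paper fixes the slice $\{z=z_q,\,t=t_q\}$ through the target point and inverts the induced map $\chi_q:\mathcal{U}_1\to\CP^1$, $X\mapsto[\zeta_Q,z_q]$ — and that projective coordinate, combined with the vanishing of $H^{X^t}+F^{X^t}$ at $z=0$ (so the difference quotient by $z_q$ is controlled by $\nabla_z\nabla_X$ of these functions), is precisely what makes the estimate uniform near the degenerate axis $z=0$, the point you flag at the end of your proposal.
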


\begin{oss}
Following the terminology used in \cite{BR}, we can restate this proposition by saying that there exist locally \textit{polar} foliations made of $3$-surfaces built from embedded, Legendrian, $J$-invariant disks.
\end{oss}

\begin{proof}
Choose any point $q=(\zeta_q, z_q, t_q) \in B^5 \subset \R^5=\C \times \C \times \R$ which lies inside the set $\{|\zeta|\leq |z|\leq 1, |t| \leq \frac{1}{2}\}$. We need to show the existence and uniqueness of $X\in \mathcal{U}_1$ such that $q \in \Sigma_0^X$.

For $X \in \mathcal{U}_1$, denote by $Q=Q(q,X)$ the intersection point 
\be
Q=Q(q,X):= \Sigma_0^X \cap \{(\zeta, z, t): z=z_q, t=t_q\}.
\ee

This is well-defined because $|T \Sigma_0^X - X| \leq K(\eps)$ and the $3$-plane spanned by $X$ and $\p_t$ is transversal to the $2$-plane $\{(\zeta, z, t): z=z_q, t=t_q\}$ and they have a unique intersection point. By intersection theory, for $\eps$ small enough, $Q$ is well defined for all $X \in \mathcal{U}_1$.

Consider the map

\be
\begin{array}{ccc}
\chi_q: \mathcal{U}_1 & \rightarrow & \CP^1 \\
X & \rightarrow & [\zeta_Q, z_q]
\end{array}
\ee

Due to the structure of $\Sigma_0^X$, the intersection $Q$ is actually realized, for a certain $t$, as 
\be
Q= \tilde{\mathcal{L}}_{0,X^t} \cap \{(\zeta, z, t): z=z_q, t=t_q\}
\ee

and we can also write 
\be
\chi_q(X)=[(H^{X^t} + F^{X^t})(z_q), z_q]
\ee
for the right $t$.

We will now prove that $\chi_q$ is a $C^1$-perturbation of the identity map, which is nothing else but

\be
\begin{array}{ccc}
Id: \mathcal{U}_1 & \rightarrow & \mathcal{U}_1\\
X & \rightarrow & [H^X(z_q), z_q].
\end{array}
\ee

More precisely, we will prove that, independently of $q$,

\be
\|\nabla(\chi_q - Id)\|_{L^\infty} \leq K(\eps),
\ee

for a constant $K(\eps)$ which is an infinitesimal of $\eps$.

We can use the chart $X= [W_1, W_2]=\frac{W_1}{W_2}$ on $\mathcal{U}_1 \subset \CP^1$. Then we must estimate 

$$
\|\nabla(\chi_q - Id)\|_{L^\infty} = \left \| \nabla_X \left( \frac{(H^{X^t} + F^{X^t})(z_q)}{z_q} -\frac{H^X(z_q)}{z_q} \right) \right \|_{L^\infty} 
$$

$$
\leq  \left \| \nabla_z \nabla_X \left( H^{X^t}-H^X + F^{X^t} \right) \right \|_{L^\infty} \leq  
$$

$$
\left \| \nabla_z \nabla_X ( H^{X^t}-H^X ) \right \|_{L^\infty} + \left \| \nabla_z \nabla_X  F^{X^t}  \right \|_{L^\infty}  \leq K(\eps)
$$

thanks to (\ref{eq:estF}).

Thus $\chi_q$ is a diffeomorphism from $\mathcal{U}_1$ to an open subset of $\CP^1$ that tends to $\mathcal{U}$ as $\eps \to 0$. This means that we can invert $\chi_q$ and, for any chosen $q$ we can find $X_q:=(\chi_q)^{-1}([\zeta_q, z_q])$ such that $q \in \Sigma_0^{X_q}$.

\end{proof}

\textbf{Construction of the 3-dimensional surfaces: parallel foliation}. We are always using coordinates as in (\ref{eq:complexcoords}), so that $H_0$ is identified with $\C^2$.

Choose a $J$-invariant plane $X \in \mathcal{U}_1$ passing through $0$. We are going to produce a family of ``parallel'' $3$-dimensional surfaces which foliate a neighbourhood of $0$, where parallel means the following: each 3-surface has tangent planes which are everywhere $\eps$-close to $X \wedge \p_t$ in $C^{2,\nu}$-norm.

This can be done in several ways, we choose the following. Take the vector $v$ in the unit ball inside $X$ which minimizes the distance to $\p_{x_1}$, and set $V=\frac{v}{|v|}$. Parallel transport (in the euclidean sense\footnote{Again, this is just a possible way of doing it: there is no direct geometric meaning.}) the vector $V$ to each point $P$ in the 2-plane $\{z=0, t=0\}$ and consider the family of $J$-invariant planes 
$$\{X_P\}:=\{ V \wedge J_P(V)\}_{P \in \{z=0, t=0\}}.$$

Now, for each $P$, consider the line of points that project to $P$ via $\pi: \R^5 \to \R^4$, and denote them by $(P,t)$. Take the Legendrian, $J$-invariant 2-surface going through the point $(P,t)$ with tangent $X_P^t=  V \wedge J_{(P,t)}(V)$: we will denote it by $\tilde{\mathcal{L}}_{P,t,X}$. Define the 3-dimensional surface 

\be
\label{eq:sigma2}
\Sigma_P^X:= \cup_{t \in (-1, 1)} \tilde{\mathcal{L}}_{P,t,X}.
\ee

As in (\ref{eq:sigma}), this is a smooth 3-surface.

\begin{Prop}
\label{Prop:parallel}
For a fixed $X\in \mathcal{U}_1$, the 3-surfaces $$\{\Sigma_P^X\}_{P \in \{z=0, t=0, |\zeta|\leq 1\}}$$ foliate the set $\{(\zeta, z, t): |\zeta|\leq 1,  |z|\leq 1, |t|\leq \frac{1}{2}\} \subset \C \times \C \times \R=\R^5$.
\end{Prop}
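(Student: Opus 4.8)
The plan is to follow the proof of Proposition~\ref{Prop:polar} essentially verbatim, replacing the parameter space $\mathcal{U}_1\subset\CP^1$ of $J$-invariant $2$-planes through a point of the $t$-axis by the parameter space of base points $P\in\{z=0,\,t=0,\,|\zeta|\le 1\}\subset\C$, which I identify with its $\zeta$-coordinate $\zeta_P$. First I would record the two facts that make the argument run: by construction, as explained just before the statement (compare (\ref{eq:sigma})), each $\Sigma_P^X$ is a smooth $3$-surface parametrized over $D_0\times(-1,1)$, and, thanks to (\ref{eq:estF}), all its tangent planes are $K(\eps)$-close in $C^{2,\nu}$ to the fixed $3$-plane $X\wedge\p_t$. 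Since $X\in\mathcal{U}_1$ is a graph over the $z$-plane, $X\wedge\p_t$ is complementary in $\R^5$ to every $2$-plane of the form $\{z=z_q,\,t=t_q\}$; hence for $\eps$ small so is every tangent plane of $\Sigma_P^X$.

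Next, fixing a target point $q=(\zeta_q,z_q,t_q)$ with $|\zeta_q|\le 1$, $|z_q|\le 1$, $|t_q|\le\frac12$, I would define the intersection point $Q(q,P):=\Sigma_P^X\cap\{z=z_q,\,t=t_q\}$. Transversality is immediate from the $\eps$-closeness above; existence of $Q(q,P)$ for \emph{all} $P$ in the relevant compact range follows from a degree/intersection-theory argument, exactly as in the proof of Proposition~\ref{Prop:polar}. I would then set up the ``shooting map'' $\chi_q\colon\{|\zeta_P|\le 1\}\to\C$, $\chi_q(P):=$ ($\zeta$-coordinate of $Q(q,P)$). Using that $\Sigma_P^X$ is the union over the leaf-label $t$ of the Legendrians $\tilde{\mathcal{L}}_{P,t,X}$, each passing through $(\zeta_P,0,t)$, I would write the $\R^4$-projection of $\tilde{\mathcal{L}}_{P,t,X}$ as the graph over $D_0$ of $\zeta=\zeta_P+c_{P,t}\,z+F^{P,t}(z)$, where $c_{P,t}$ is the ($z$-)slope of the projection of $X_P^t=V\wedge J_{(P,t)}(V)$ and $F^{P,t}$ is a perturbation whose $z$-, $P$- and $t$-derivatives are all bounded by $K(\eps)$ in view of (\ref{eq:estF}) and its higher-order analogues. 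The point $Q(q,P)$ is realized on the leaf whose lift attains $t$-coordinate $t_q$ over $z=z_q$; this selects $t=t(q,P)$, smooth in $(q,P)$, and
\[
\chi_q(P)=\zeta_P+c_{P,t(q,P)}\,z_q+F^{P,t(q,P)}(z_q).
\]

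The comparison step is then the heart of the matter. Since $\|J-J_0\|_{C^{2,\nu}}=\eps$ on the unit ball, the slope $c_{P,t}$ of $X_P^t$ differs from the slope $c$ of $X=V\wedge J_0(V)$ by $O(\eps)$, with the same bound on its $P$-derivatives; combined with the estimate on $F^{P,t}$ this gives, uniformly in $q$, that $\chi_q$ differs from the fixed affine bijection $\zeta_P\mapsto\zeta_P+c\,z_q$ of $\C$ by a $C^1$-perturbation of size $K(\eps)$. Hence, for $\eps$ small, $\chi_q$ is a diffeomorphism of $\{|\zeta_P|\le 1\}$ onto an open subset of $\C$ that is $K(\eps)$-close to a translate of the unit disk, and which therefore contains $\zeta_q$ for every $q$ in the region under consideration. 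Inverting, $P_q:=\chi_q^{-1}(\zeta_q)$ is the unique $P$ with $q\in\Sigma_P^X$; running this over all $q$ in $\{|\zeta|\le 1,\,|z|\le 1,\,|t|\le\frac12\}$ gives the asserted foliation, and repeating the construction with $C^{m,\nu}$ in place of $C^{2,\nu}$ yields leaves of arbitrary smoothness, as after (\ref{eq:sigma}).

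The main obstacle is not conceptual — there is no new idea relative to Proposition~\ref{Prop:polar} — but lies in making the perturbation estimate $\|\nabla_{\zeta_P}(\chi_q-\text{affine})\|_{L^\infty}\le K(\eps)$ genuinely \emph{uniform in the target point} $q$: this rests on the uniformity in $p$ and $X$ of (\ref{eq:estfpX}) and (\ref{eq:estF}), on the smooth dependence of the leaf-selecting parameter $t(q,P)$ on $(q,P)$, and on having enlarged $D_0$ to a disk over which every $\tilde{\mathcal{L}}_{P,t,X}$ in play is genuinely a graph. Once these essentially bookkeeping points are in place, the inverse-function/degree argument closes exactly as in the polar case, and the only step needing a little independent care is the existence (as opposed to transversality) of $Q(q,P)$ for all $P$ in the compact parameter set, which is where the degree argument is invoked.
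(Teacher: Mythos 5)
Your proposal is correct and follows essentially the same route as the paper: the paper's proof likewise defines the intersection point $Q(q,P):=\Sigma_P^X\cap\{z=z_q,\,t=t_q\}$, gets existence and uniqueness from $|T\Sigma_P^X-X|\leq K(\eps)$ and intersection theory, and shows that the shooting map $P\mapsto Q(q,P)$ is a $C^1$-perturbation of the identity by the same computation as in Proposition \ref{Prop:polar}, before inverting. Your explicit comparison with the affine map $\zeta_P\mapsto\zeta_P+c\,z_q$ is just a more detailed bookkeeping of that same step.
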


\begin{oss}
Again, in the terminology of \cite{BR}, we are showing that there exist (locally) families of \textit{parallel} foliations made of $3$-surfaces built from embedded, Legendrian, $J$-invariant disks. Each family is determined by a "direction" $X$ at $0$.
\end{oss}

\begin{proof}

Take any $q=(\zeta_q, z_q, t_q) \in B^5 \subset \R^5=\C \times \C \times \R$. Denote by $Q=Q(q,X)$ the intersection point 
\be
Q=Q(q,P):= \Sigma_P^X \cap \{(\zeta, z, t): z=z_q, t=t_q\}.
\ee

This is well-defined because $|T \Sigma_P^X - X| \leq K(\eps)$ and the $3$-plane spanned by $X$ and $\p_t$ is transversal to the $2$-plane $\{(\zeta, z, t): z=z_q, t=t_q\}$ and they have a unique intersection point. By intersection theory, for $\eps$ small enough, $Q$ is uniquely well-defined for all $P \in \{z=0, t=0\}$.

Consider the map

\be
\begin{array}{ccc}
\Gamma_q: D^2_1 \subset \{z=0, t=0\} & \rightarrow & \R^2 \cong \{(\zeta, z, t): z=z_q, t=t_q\} \\
& & \\
P & \rightarrow & Q=Q(q,P)
\end{array}
\ee

With an argument very similar to the one in proposition \ref{Prop:polar}, we can prove that $\Gamma_q$ is a $C^1$-perturbation of the identity map and therefore the family

\be
\{\Sigma_P^X\}_{P \in \{z=0, t=0, |\zeta|\leq 1\}}
\ee

foliates $\{|\zeta|\leq 1,  |z|\leq 1, |t|\leq \frac{1}{2}\}$.

\end{proof}

\medskip

Remark, from the construction of these $3$-surfaces $\Sigma$, that each of them is made by attaching $J$-invariant Legendrian disks along a fiber of the contact structure. This fact yields the following fundamental
  
\medskip 

\textbf{positive intersection property}: each $\Sigma$ constructed above has the property of intersecting positively any transversal $J$-invariant Legendrian.

\medskip

The proof is just analogous to the corresponding corollary 2.1 of \cite{BR}. The key point is that two transversal $J$-invariant 2-planes in a hyperplane $H_p$ intersect themselves positively with respect to the orientation inherited by $H_p$. The 3-surfaces $\Sigma$ are smooth perturbations of a 3-plane of the form $X \wedge \p_t$ for a $J$-invariant 2-plane $X$, so the result follows by continuity.

\medskip

At this stage we have all the ingredients to show theorem \ref{thm:main} by following the proof in sections 3, 4, 5, 6 and 7 of \cite{BR}. For the convenience of the reader, here follows a brief overview of the forementioned proof with references to the corresponding sections of \cite{BR}.

\subsection{Structure of the proof}

A standard blow-up procedure, combined with the almost-monotonicity formula for semi-calibrated cycles\footnote{Recall that in section \ref{j2f} we remarked that $J$-invariant $2$-planes are just the semi-calibrated ones for a suitable $2$-form $\Omega$, therefore an almost-monotonicity formula (see \cite{PR}) holds with respect to the metric induced by $J$ and $\Omega$. Precisely, for any point $x_0$, denoting by $B_r$ the geodesic ball of radius $r$, we have that $\displaystyle \frac{M(C \res B_r(x_0))}{r^2}=R(r) + O(r)$ for a function $R$ which is monotonically non-increasing as $r \downarrow 0$ and tends to the multiplicity at $x_0$ as $r \downarrow 0$, and a function $O(r)$ which is infinitesimal.}, yields that $C$ has a ``stratified'' structure: the multiplicity is well-defined and integer-valued at every point and, for $Q\in \N$, the set $\mathcal{C}^Q$ of points having multiplicity $\leq Q$ is open in $\mathcal{M}$. This allows a localization of the problem by restricting to $\mathcal{C}^Q$ and we can prove the final result by induction on the multiplicity for increasing integers $Q$.

\medskip

A first outcome of the existence of foliations with the positive intersection property, is a self-contained proof of the uniqueness of tangent cones (section 4 of \cite{BR}). This result was proved for general semi-calibrated cycles in \cite{PR} and for area-minimizing ones in \cite{W}, using different techniques.

\medskip

Next, still exploiting the algebraic property of positive intersection, we can locally describe our current $C$ as a multi-valued graph from a two-dimensional disk into $\R^3$ (section 5 of \cite{BR}). The inductive step is divided into two parts: in the first we show that singularities of order $Q$ cannot accumulate onto a singularity of the same multiplicity (sections 5 and 6 of \cite{BR}). In the second part, we prove that singularities of multiplicity $\leq Q-1$ cannot accumulate on a singularity of order $Q$ (section 7 of \cite{BR}).

\medskip

In the first part of the inductive step, we translate the $J$-invariance condition into a system of first-order PDEs for the multi-valued graph (section 5 of \cite{BR}). These equations are ``perturbations'' of the classical Cauchy-Riemann equations, although in this case we have two real variables and \underline{three} functions. We prove a $W^{1,2}$-estimate on the average of the branches of the multi-valued graph (theorem 5.1 of \cite{BR}). Then (section 6 of \cite{BR}) we complete the proof of the first part of the inductive step by suitably adapting the unique continuation argument used in \cite{Ta}.

\medskip

For the second part of the inductive step (section 7 of \cite{BR}) we use a homological argument. On a space modelled on $C \times \R$, we produce a $S^2$-valued function $u$ which allows to ``count'' the lower-multiplicity singularities by looking at its degree on the level sets of $|u|$ (lemma 7.3 of \cite{BR}). A lower bound for the degree (lemma 7.4 of \cite{BR}) then yields the result. This argument is inspired to the one used in \cite{Ta}, however the fifth coordinate induces a more involved and rather lengthy argument.

\section{Final remarks}
\label{ex}
\textbf{Examples}. Let us illustrate some examples where the regularity result of theorem \ref{thm:main} applies.

\medskip

$\bullet$ Let $\mathcal{Y}$ be a Calabi-Yau $3$-fold and denote by $\Theta$ the so-called \textit{holomorphic volume form} and by $\beta$ the symplectic form. Any\footnote{A Calabi-Yau $3$-fold has \textit{real} dimension 6. By hypersurface we mean here that the \textit{real} codimension is 1.} hypersurface $M^5 \subset \mathcal{Y}$ of contact type inherits a contact structure from the symplectic structure of $\mathcal{Y}$ (see \cite{MS}), namely the structure associated to the one-form $\alpha=\iota_N \beta$, where $N$ denotes a Liouville vector field and $\iota$ denotes the interior product. If $\mathcal{Y}=\C^3$ (with the standard complex structure) then $M^5$ can be, for example, the boundary of any smooth, star-shaped (with respect to the origin) domain and $N$ the radial vector field. The $2$-form 
$$\om= \iota_N \Theta$$
(restricted to $M$) is a horizontal two-form for this contact structure and satisfies $\om \wedge d\alpha=0$, $\om \wedge \om=(d\alpha)^2$. Moreover $\om$ is of comass $1$, it is therefore a semi-calibration. Then we deduce from proposition \ref{Prop:calibr} that integral cycles (semi)calibrated by $\om$ are smooth except possibly at isolated point singularities.

\medskip

$\bullet$ In the previous framework, we can also recover the Special Legendrians in $S^5$. Consider the canonical embedding $\mathcal{E}: S^5 \hookrightarrow \mathbb{C}^3$ and denote by $N$ the radial vector field $N:=r \frac{\partial}{\partial r}$ in $\mathbb{C}^3$. The sphere inherits from the symplectic manifold $\displaystyle (\mathbb{C}^3, \sum_{i=1}^3 dz^i \wedge d\overline{z}^i)$ the contact structure given by the form 
$$\gamma := \mathcal{E}^\ast \iota_N (\sum_{i=1}^3 dz^i \wedge d\overline{z}^i).$$
The $3$-form $\Omega = Re(dz^1 \wedge dz^2 \wedge dz^3)$ is known as \textit{Special Lagrangian calibration} in $\C^3$. The \textit{Special Legendrian semi-calibration} is defined as the following $2$-form on $S^5$ (of comass 1):
$$\omega := \mathcal{E}^\ast \iota_N \Omega = Re(z_1 dz^2 \wedge dz^3 + z_2 dz^3 \wedge dz^1 + z_3 dz^1 \wedge dz^2).$$
$\om$-semicalibrated cycles are known as Special Legendrians.

We remark that there is a natural projection $\Pi:S^5 \to \CP^2$ (Hopf projection) whose kernel is given by the Reeb vectors of the contact distribution. The Reeb vector field can be integrated to obtain closed orbits which are nothing but the Hopf fibers $e^{i \theta} p$, for $p \in S^5$ and $\theta \in [0, 2\pi)$. Every Special Legendrian is projected via $\Pi$ to a minimal Lagrangian in $\CP^2$ (see \cite{R}).

\medskip

$\bullet$ The same as in the first example of this section applies, more generally, in a contact 5-manifold with an SU(2)-structure, as defined in \cite{CS}. In the mentioned work, it is proved that, if the data are analytic, then this 5-manifold embeds in a Calabi-Yau $3$-fold. Our regularity result, however, only requires the SU(2)-structure on a contact 5-manifold.

\medskip

$\bullet$ Let us look at the following situation, \cite{Ri}. Let $S^3$ be the unit sphere in $\R^4$ and consider the following Riemannian $5$-manifold
$$N^5 = \{(e_1, e_2) \in S^3 \times S^3: \langle e_1, e_2 \rangle_{\R^4}=0\}, $$ 
endowed with the metric inherited from $\R^4 \times \R^4$.

The tangent space to $N^5$ at a point $(e_1, e_2)$, is identified with those $U =(U_1, U_2) \in \R^4 \times \R^4$, such that $\langle U_1, e_1 \rangle_{\R^4}=0$, $\langle U_2, e_2 \rangle_{\R^4}=0$ and $\langle U_1, e_2 \rangle_{\R^4}+  \langle U_2, e_1 \rangle_{\R^4}=0$.

At every $(e_1,e_2) \in N^5$, consider the tangent vector $v=(-e_2, e_1) \in T_{(e_1, e_2)}N^5$ and take the orthogonal hyperplane $H_{(e_1, e_2)}= v^\perp \subset T_{(e_1, e_2)} N^5$. The distribution $H$ defines a contact structure on $N^5$. It can be described by the one-form $\alpha_{(e_1, e_2)}(U)=\frac{1}{2}\left(\langle e_1, U_2 \rangle_{\R^4}-\langle e_2, U_1 \rangle_{\R^4}\right)$ with associated symplectic form $\Om(U,V)=\langle U_1, V_2 \rangle_{\R^4}-\langle V_1, U_2 \rangle_{\R^4}$.

By integrating the Reeb vectors $v$, we get closed fibers isomorphic to $S^1$ of the form $$\{(\cos \theta e_1 - \sin \theta e_2, \sin \theta e_1 + \cos \theta e_2)\}_{\theta \in [0,2\pi)}.$$
The map\footnote{Here $G_2(\R^4)$ denotes the Grassmannian of $2$-planes in $\R^4$. We have the identification $G_2(\R^4) \cong \CP^1 \times \CP^1$ by splitting into the self-dual and anti self-dual components.}
$$\begin{array}{ccc}
\Pi: N^5 & \to & G_2(\R^4) \cong \CP^1 \times \CP^1 \\
(e_1, e_2) & \to & e_1 \wedge e_2
\end{array}$$
is an orthogonal projection whose kernel is given by the Reeb vectors. 

Define the following $2$-form on $N^5$
$$\om(U,V):= e_1 \wedge e_2 \wedge (U_1 \wedge V_2 - V_1 \wedge U_2), $$ 
$$\text{for } U,V \text{ tangent vectors to } N^5 \text{ at }(e_1, e_2).$$
It can be checked that $\om$ is a horizontal form of comass $1$ and our regularity result applies to $\om$-semicalibrated cycles.

\medskip

$\bullet$ In \cite{TV}, the authors introduce the notions of Contact Calabi-Yau manifolds and Special Legendrians in Contact Calabi-Yau manifolds. With regard to the notation in \cite{TV}, the two-form $Re \;\epsilon$ is a calibration (it is assumed to be closed) and Proposition \ref{Prop:calibr} yields the regularity of cali\-brated cycles in dimension $5$. We still get the regularity result if we drop the closedness assumption on $\epsilon$.

\medskip

\textbf{What else?} We conclude with a short motivational digression regarding theorem \ref{thm:main}, in connection to general calibrations.

\medskip

For a general calibrating $2$-form $\varphi$ in a $5$-dimensional manifold $M$, let us look, at every point, at the set $\mathcal{G}_\varphi$ of calibrated $2$-planes: as explained in \cite{HL} (Thm. II 7.16) or \cite{J} (Thm. 4.3.2), there exist suitable orthogonal coordinates at the chosen point such that $\mathcal{G}_\varphi$ is the same as the set of $2$-planes calibrated by one of the following canonical forms
$$dx^1 \wedge dx^2 + dx^3 \wedge dx^4 \; \text{ or }\;  dx^1 \wedge dx^2.$$
At the points where the first case is realized, we can define an almost complex structure $J$ such that calibrated $2$-planes are identified with the $J$-invariant ones. If moreover the manifold $M$ is contact, then, as we already discussed, a calibrated manifold (or also an integer multiplicity rectifiable current) can have as tangents only those $J$-invariant planes which are Lagrangian for the symplectic form on the horizontal distribution. Therefore, if we require the calibration to admit, for every point $p$ and calibrated $2$-plane $\Pi$ at $p$, a calibrated submanifold passing through $p$ with tangent $\Pi$, the corresponding $J$ must fulfil conditions (\ref{eq:Jlagr}) and (\ref{eq:anticomp}).

In many instances, a calibration is considered interesting if it admits a lot of calibrated submanifolds\footnote{This point of view is present both in \cite{HL} and in \cite{J}.}. Indeed, the richer the family of calibrated submanifolds is, more examples of area-minimizing surfaces and their possible singularities can we get. On a contact $5$-manifold, therefore, our assumption on $J$ includes, in some sense, the most generic cases of calibrations. 

This $5$-dimensional situation can be considered as the analogue of the one addressed in \cite{RT1} and \cite{Ta} in dimension $4$, or in \cite{RT2} for general even dimension, where the corresponding regularity for $J$-holomorphic cycles is proven.


\begin{thebibliography}{99}


\bibitem{BR} Bellettini, Costante and Rivi\`ere, Tristan {\it The regularity of Special Legendrian integral cycles}, preprint 2009.
\bibitem{B} Blair, David E. {\it Riemannian geometry of contact and symplectic manifolds}, {Progress in Mathematics}, {203}, {Birkh\"auser Boston Inc.}, {Boston, MA}, {2002}, {xii+260}.
\bibitem{CS} Conti, Diego and Salamon, Simon {\it Generalized {K}illing spinors in dimension 5}, {Trans. Amer. Math. Soc.}, {359}, {11}, {5319--5343}, {2007}.
\bibitem{DT} Donaldson, Simon K. and Thomas, Richard P. {\it Gauge Theory in higher dimensions} in
"The
geometric Universe" (Oxford, 1996), Oxford Univ. Press, 1998, 31-47.
\bibitem{F} Federer, Herbert {\it Geometric measure theory.}, Die Grundlehren der mathematischen Wissenschaften, Band 153, Springer-Verlag New York Inc., New York, 1969, xiv+676.
\bibitem{G} Giaquinta, Mariano and Modica, Giuseppe and Sou{\v{c}}ek,
              Ji{\v{r}}{\'{\i}}  {\it Cartesian currents in the calculus of variations. {I}.}, Ergebnisse der Mathematik und ihrer Grenzgebiete. 3. Folge. A
              Series of Modern Surveys in Mathematics [Results in
              Mathematics and Related Areas. 3rd Series. A Series of Modern
              Surveys in Mathematics], 37, Cartesian currents, Springer-Verlag, Berlin, 1998, xxiv+711.
\bibitem{HL} Harvey, Reese and Lawson, H. Blaine Jr. {\it Calibrated geometries}, {Acta Math.},{148}, {47--157},{1982}.
\bibitem{J} Joyce, Dominic {\it Riemannian holonomy groups and calibrated geometry.}, {Oxford Graduate Texts in Mathematics},{12}, {Oxford University Press}, {Oxford}, {2007}, {x+303}.
\bibitem{MS} McDuff, Dusa and Salamon Dietmar {\it Introduction to symplectic topology}, {Oxford Mathematical Monographs}, {2}, {The Clarendon Press Oxford University Press}, {New York}, {1998}, {x+486}.
\bibitem{M} Morgan, Frank {\it Geometric measure theory: a beginner's guide}, {Fourth edition}, {Elsevier/Academic Press, Amsterdam}, {2009}, {viii+249}.
\bibitem{PR} Pumberger, David and Rivi{\`e}re, Tristan  {\it Uniqueness of tangent cones for semi-calibrated 2-cycles.}, to appear in Duke Math. J.
\bibitem{R} Reckziegel, Helmut {\it Horizontal lifts of isometric immersions into the bundle space of a pseudo-Riemannian submersion}, Global differential geometry and global analysis 1984 (Berlin, 1984),  264--279,
Lecture Notes in Math., 1156, Springer, Berlin, 1985. 
\bibitem{Ri} Rivi\`ere, Tristan, \textit{private communication}.
\bibitem{RT1} 
Rivi\`ere, Tristan; Tian, Gang {\it The singular set of $J$-holomorphic maps into projective algebraic varieties.} 
 J. Reine Angew. Math.  570  (2004), 47--87. 58J45
\bibitem{RT2} Rivi{\`e}re, Tristan and Tian, Gang  {\it The singular set of 1-1 integral currents.}, Ann. of Math. (2), Annals of Mathematics. Second Series, 169, 2009, 3, 741-794.
\bibitem{SYZ} Strominger, Andrew and Yau, Shing-Tung and Zaslow, Eric \textit{Mirror symmetry is T-duality}, Nuclear
Phys. B 479 (1996), 243-259.
\bibitem{Ta} Taubes, Clifford Henry {\it Seiberg Witten and Gromov invariants for symplectic 4-manifolds}, "$\rm Gr\Rightarrow SW$: from pseudo-holomorphic curves to Seiberg-Witten solutions ".   163--273, First Int. Press Lect. Ser., 2, Int. Press, Somerville, MA, 2000.
\bibitem{T} Tian, Gang {\it Gauge theory and calibrated geometry I}, {Ann. of Math. (2)}, Annals of Mathematics. Second series. {151}, {1}, {193-268}, {2000}.
\bibitem{TV} Tomassini, Adriano and Vezzoni, Luigi {\it Contact Calabi-Yau manifolds and Special Legendrian submanifolds}, {Osaka J. Math.}, {45}, {127--147}, {2008}.
\bibitem{W} White, Brian {\it Tangent cones to two-dimensional area-minimizing integral
              currents are unique.} , Duke Math. J., Duke Mathematical Journal, 50, 1983, 1, 143--160.

\end{thebibliography}
\end{document}